\DeclareMathOperator{\II}{I}
\DeclareMathOperator{\rank}{rank}
\DeclareMathOperator{\SHI}{SHI}
\DeclareMathOperator{\KHI}{KHI}
\newtheorem{THE}{Theorem}[section]
\newtheorem{thm-defn}[THE]{Theorem/Definition}
\newtheorem{LE}[THE]{Lemma}
\newtheorem{PR}[THE]{Proposition}
\theoremstyle{definition}
\newtheorem{DEF}[THE]{Definition}
\newtheorem{eg}[THE]{Example}
\theoremstyle{remark}
\newtheorem*{rmk}{Remark}
\numberwithin{equation}{section}
\author{Yi Xie}
\title{Earrings, sutures and pointed links}
\date{}
\newcommand{\Address}{{
  \bigskip
  \footnotesize
   Yi Xie, \textsc{Simons Center for Geometry and Physics, State University of New York,
  Stony Brook, NY 11794}\par\nopagebreak
  \textit{E-mail address}: \texttt{yxie@scgp.stonybrook.edu}
}}
\begin{document}
\maketitle
\begin{abstract}
We prove a rank inequality on the instanton knot homology and the Khovanov homology of a link in $S^3$.
The key step of the proof is to construct
 a spectral sequence relating 
Baldwin-Levine-Sarkar's pointed Khovanov homology to a singular instanton invariant for pointed links.
\end{abstract}

\section{Introduction}
The relationship between Khovanov homology and different Floer  theories has been studied a lot since 
Ozsv\'{a}th and Szab\'{o}  discovered the first spectral sequence relating Khovanov homology to Floer homology \cite{OS-ss}.
In this paper we continue studying this topic using instanton Floer homology.

Given a link $L$ in $S^3$ with $m$ components, its Khovanov homology $Kh(L)$ is equipped with a 
\begin{equation*}
R_m=\mathbb{Z}[X_1,X_2,\cdots, X_m]/(X_1^2, X_2^2,\cdots,X_m^2)
\end{equation*}
module structure \cite{Kh-module,HN-module}. Let $\mathbf{X}=(X_1,\cdots,X_m)\in R_m^{\oplus m}$ and $K(\mathbf{X},Kh(L))$ 
be the Koszul complex. 
We prove the following.
\begin{THE}\label{KHI<Kh}
Suppose $L$ is a link in $S^3$, then we have
\begin{equation*}
2\dim_{\mathbb{C}} \KHI(L) \le \rank_{\mathbb{Z}} H(K(\mathbf{X},{Kh}(L)))
\end{equation*}
\end{THE}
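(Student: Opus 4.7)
The plan is to introduce a singular instanton invariant $\THI(L,\mathbf{p})$ of the link $L$ equipped with one basepoint on each component, and to sandwich it between the two sides of the claimed inequality. Concretely, I would establish a lower bound
\[
2\dim_{\mathbb{C}} \KHI(L) \;\le\; \dim_{\mathbb{C}} \THI(L,\mathbf{p})
\]
via an earring comparison, together with an upper bound
\[
\dim_{\mathbb{C}} \THI(L,\mathbf{p}) \;\le\; \rank_{\mathbb{Z}} H(K(\mathbf{X},Kh(L)))
\]
coming from a spectral sequence whose $E_{2}$-page is the Koszul homology and whose abutment is $\THI(L,\mathbf{p})$. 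Chaining these two bounds yields the theorem.

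For the lower bound, each basepoint is realised as an extra earring --- a small meridional unknot carrying a singular arc --- attached to its component of $L$. The pointed invariant $\THI(L,\mathbf{p})$ should differ from $\KHI(L)$ by a two-dimensional tensor factor: one of the earrings contributes a $\mathbb{C}^{2}$ coming from an $H^{*}(S^{2})$-style module structure carried by the extra basepoint, while the remaining earrings align with the basepoint data already encoded in the definition of $\KHI(L)$. A sanity check on unknots and unlinks forces equality in this bound and explains where the factor $2$ comes from.

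For the upper bound I would mimic Kronheimer--Mrowka's cube-of-resolutions strategy for $\II^{\natural}$. At each vertex of the oriented-smoothing cube of $L$ one places the singular instanton homology of the corresponding pointed unlink, with basepoint-tracking saddle cobordisms as edge maps. The first page of the induced spectral sequence should recover the Khovanov cochain complex; the secondary differential, generated by the $\muu$-operators at the earrings, should then be identified with multiplication by $X_{i}$ on $Kh(L)$. The $E_{2}$-page thereby becomes the Koszul homology $H(K(\mathbf{X},Kh(L)))$ and the spectral sequence converges to $\THI(L,\mathbf{p})$, which is exactly the Baldwin--Levine--Sarkar pointed picture promised by the abstract.

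The hard part will be this last identification of the secondary differential with the Koszul boundary $\mathbf{X}$. The earring operator is defined on the instanton side by integrating a $\mu$-class over an auxiliary torus, while the $X_{i}$-action of Khovanov / Hedden--Ni is defined diagrammatically via dot-moving cobordisms. Matching them requires showing that the basepoint operators commute, up to coherent homotopy, with every saddle map in the cube of resolutions --- equivalently, that the Baldwin--Levine--Sarkar pointed Khovanov chain complex arises naturally from the pointed cube of resolutions at the level of instanton chain complexes. Establishing this basepoint-functoriality statement is the technical core of the argument; everything else is a package of standard exact-triangle and spectral-sequence bookkeeping.
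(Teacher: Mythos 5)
Your proposal follows the same overall architecture as the paper: introduce a pointed singular instanton invariant (your $\THI(L,\mathbf{p})$ is the paper's $\II^\sharp(L,\mathbf{p})$), show $2\dim\KHI(L)=\dim\II^\sharp(L,\mathbf{p};\mathbb{C})$, and bound $\dim\II^\sharp$ above by a cube-of-resolutions spectral sequence whose pointed Khovanov page further degenerates to a Koszul complex. Two technical points you elide, however, are precisely where the work lies.

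First, the secondary differential you want to identify with multiplication by $X_i$ is actually $2X_i$: the earring operator $\sigma_p$ acts on $\II^\sharp(U_1)$ by $\mathbf{v}_+\mapsto 2\mathbf{v}_-$, $\mathbf{v}_-\mapsto 0$ (Proposition \ref{unknot-gen}), not $\mathbf{v}_+\mapsto\mathbf{v}_-$. So the $E_1$-page is \emph{not} the Baldwin--Levine--Sarkar complex on the nose; the paper must introduce a modified differential $d'_\mathbf{p}$ (with $2X_{p_i}$ in place of $X_{p_i}$) and a variant homology $Kh'(L,\mathbf{p})$, which only agrees with $Kh(L,\mathbf{p})$ after inverting $2$. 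This is exactly why the theorem bounds $\rank_{\mathbb{Z}}$ (equivalently $\dim_{\mathbb{Q}}$) of the Koszul homology and not, say, its $\mathbb{Z}/2$-dimension. If you insist on identifying the secondary differential with $X_i$ your chain-level comparison will fail.

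Second, the lower bound you call a ``sanity check'' is an equality requiring a real argument on both halves: $\KHI(L)\cong\II^\natural(L,\mathbf{p};\mathbb{C})$ (one marking point per component) is proved by the excision theorem applied to the earring tori (Proposition \ref{SHI=I}), and $\dim\II^\sharp=2\dim\II^\natural$ over a field of odd characteristic is proved via Fukaya's connected-sum spectral sequence together with the identity $\sigma_p^2=8-4\mu(x)$, which forces the $\mu$-differential to vanish (Lemma \ref{Isharp=2I}). An appeal to ``an $H^*(S^2)$-style module structure'' is not a substitute for the connected-sum/Fukaya argument; indeed the factor becomes $4$ rather than $2$ over $\mathbb{Z}/2$, so the coefficient restriction is load-bearing.
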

Here $\KHI(L)$ is the instanton knot Floer homology defined in \cite{KM:suture,KM-Alex}. The Koszul complex is defined
as the tensor product 
$$Kh(L)\otimes_{R_m}K(\mathbf{X})$$ 
where $K(\mathbf{X})$ is the chain complex
\begin{equation*}
\xymatrix{
  K(\mathbf{X}):=0 \to  R_m \ar[r]^-{\wedge \mathbf{X}} & \Lambda^1 R_m^{\oplus m} \ar[r]^-{\wedge \mathbf{X}} & \Lambda^2 R_m^{\oplus m}
  \ar[r]^-{\wedge \mathbf{X}} & \cdots \ar[r]^-{\wedge \mathbf{X}} & \Lambda^m R_m^{\oplus m}
  \to 0
}
\end{equation*}

\begin{eg}
If $L=U_m$ is an unlink with $m$ components, then $Kh(U_m)\cong R_m$. It is easy to see 
$ H(K(\mathbf{X},{Kh}(U_m)))=\mathbb{Z}^{2^m}$. It is also known that $\dim\KHI(U_m)=2^{m-1}$. So Theorem \ref{KHI<Kh} is sharp
is this case. Another example is the Hopf link $H$. We have 
$$
Kh(H)\cong R_2/(X_1-X_2)\oplus R_2/(X_1-X_2)
$$
Based on this it is easy to check that $H(K(\mathbf{X},{Kh}(H)))=\mathbb{Z}^8$. On the other hand  
$\dim\KHI(H)=4$. So Theorem \ref{KHI<Kh} is also sharp in this case.
\end{eg}

In the situation that $L$ is a knot (denote it by $K$ now), 
Theorem \ref{KHI<Kh} is closely related to Kronheimer and Mrowka's theorem \cite{KM:Kh-unknot} which says 
\begin{equation}\label{KM-KHI<Kh}
\dim \KHI(K) \le \rank \widetilde{Kh}(K)
\end{equation}
where $\widetilde{Kh}(K)$ is the reduced Khovanov homology of $K$.
In fact, Theorem \ref{KHI<Kh} can be thought as a ``non-reduced'' version of  \eqref{KM-KHI<Kh}.
We also have a reduced version (Theorem \ref{final-inequality}) which is exactly Kronheimer-Mrowka's original theorem when $L$ is a knot.

The instanton knot homology $\KHI(K)$ is a very strong invariant of knots which is a complex vector space equipped with 
a (Alexander) $\mathbb{Z}$-grading. Similar to its Heegaard Floer homology cousin, $\KHI(K)$ detects 
the knot genus and fibered knots \cite{KM:suture}. 
In particular it detects the unknot and the trefoil. Even if we forget the grading, the dimension of $\KHI(K)$ is still enough
to detect the unknot \cite{KM:suture} and the trefoil \cite{BS}. Therefore \eqref{KM-KHI<Kh} can be used to show that
Khovanov homology detects the unknot \cite{KM:Kh-unknot} and the trefoil knots \cite{BS}.  

Kronheimer and Mrowka's proof of \eqref{KM-KHI<Kh} has two steps: 
firstly they defined an invariant $\II^\natural (L)$ using their singular instanton Floer theory,
which is isomorphic to $\KHI(K)$ when $L=K$ is a knot; then they constructed a spectral sequence relating Khovanov homology 
to $\II^\natural(L)$. The advantage of $\II^\natural (L)$ is that it satisfies the unoriented skein exactly triangle: given three links
as in Figure \ref{L210}, there is a 3-cyclic long exact sequence
\begin{equation*}
\cdots\to \II^\natural(L_2) \to \II^\natural(L_1) \to \II^\natural(L_0) \to \II^\natural(L_2) \to \cdots
\end{equation*}
The spectral sequence is obtained by ``iterating'' this exact triangle. The disadvantage of $\II^\natural$ is that even 
though it is isomorphic to $\KHI$ for knots, the Alexander grading of $\KHI$ is lost under this isomorphism.  

\begin{figure}
\centering 
\begin{tikzpicture}
\draw[thick] (1,-1) to (-1,1); \draw[thick,dash pattern=on 1.3cm off 0.25cm] (1,1) to (-1,-1);  \node[below] at (0,-1.45) {$L_2$};
\draw[dashed] (0,0) circle [radius=1.414];

\draw[thick] (2,1)  to [out=315,in=90]  (2.7,0) to [out=270,in=45]    (2,-1);  \node[below] at (3,-1.45) {$L_1$};
\draw[thick] (4,1)  to [out=225,in=90]  (3.3,0) to [out=270,in=135]   (4,-1);
\draw[dashed] (3,0) circle [radius=1.414];

\draw[thick] (5,1)  to [out=315,in=180]  (6,0.3) to [out=0,in=225]   (7,1);
\draw[thick] (5,-1)  to [out=45,in=180]  (6,-0.3) to [out=0,in=135]   (7,-1);  \node[below] at (6,-1.45) {$L_0$};
\draw[dashed] (6,0) circle [radius=1.414];

\end{tikzpicture}
\caption{}\label{L210}
\end{figure} 

If $L$ is not a knot, Kronheimer and Mrowka's spectral sequence still holds. But $\II^\natural (L)$ is not isomorphic to 
$\KHI(L)$ any more.  In the definition of $\II^\natural (L)$, an \emph{earring} (see Definition \ref{earring-def}) is added 
at the base point of $L$ and
$\II^\natural$ is defined as the singular instanton Floer homology of the new link with the earring. 
We want to generalize Kronheimer-Mrowka's definition to obtain a singular instanton invariant which is isomorphic 
to $\KHI(L)$. Before that, we introduce the following definition from \cite{BLS}.
\begin{DEF}\label{pointed-link}
A \emph{pointed} link $(L,\mathbf{p})$ in $S^3$ is a link $L$ together with a collection of marking points 
$\mathbf{p}=\{p_i\}$ on $L$.  The pointed link $(L,\mathbf{p})$ is called \emph{non-degenerate} if 
every component of $L$ contains at least one marking point in $\mathbf{p}$.
\end{DEF}
If we add an earring to each component of $L$, the singular instanton Floer homology of the resulting link turns out to be isomorphic
to $\KHI(L)$. Recall that $\KHI(L)$ is defined as the sutured instanton Floer homology of the link complement with 
a pair of oppositely-oriented meridian sutures on each boundary torus. Motivated by this fact and the definition of
Heegaard knot homology for pointed links in \cite{BL,BLS}, we  define the reduced singular instanton Floer homology
$\II^\natural(L,\mathbf{p})$ by adding an earring to each marking point and taking the singular instanton Floer homology 
when $\mathbf{p}$ is not empty. 
If $(L,\mathbf{p})$ is non-degenerate,
we can take the link complement and add a pair of oppositely-oriented meridian sutures for each point in $\mathbf{p}$,
the sutured instanton Floer homology of the resulting sutured manifold is isomorphic to $\II^\natural(L,\mathbf{p})$ 
(see Proposition \ref{SHI=I-non-degenerate}). 

Since we do not want to have any constraint on $\mathbf{p}$, we define a non-reduced version $\II^\sharp(L,\mathbf{p})$
by adding an unknot with an earring to the link used to defined $\II^\natural(L,\mathbf{p})$. 

Given a pointed link $(L,\mathbf{p})$, its Khovanov homology 
$$Kh(L,\mathbf{p})=H(CKh(L,\mathbf{p}),d_{\mathbf{p}})$$ 
is defined in \cite{BLS}. We define
a variant $Kh'(L,\mathbf{p})$ by modifying the differential $d_\mathbf{p}$ slightly. The modification is minor so that we still have
\begin{equation}
Kh'(L,\mathbf{p};\mathbb{Z}[1/2])\cong Kh(L,\mathbf{p};\mathbb{Z}[1/2])
\end{equation} 
We prove that 
$\II^\sharp$ is related to $Kh'$ by a spectral sequence.
\begin{THE}\label{ss2*}
Let $(L, \mathbf{p})$ be a pointed link in $S^3$ and $\bar{L}$ be the mirror of $L$. Then
there is a spectral sequence whose $E_2$-page is $Kh'(\bar{L},\mathbf{p})$ and which converges to 
$\II^\sharp(L,\mathbf{p})$.
\end{THE}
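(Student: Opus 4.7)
The plan is to adapt Kronheimer--Mrowka's construction of their spectral sequence from $\Kh(\bar L)$ to $\II^\natural(L)$ to the pointed setting. I would start from a planar diagram $D$ of $L$ carrying the marking points $\mathbf{p}$, orient each crossing so as to distinguish a $0$-resolution and a $1$-resolution, and set up the usual $n$-dimensional cube of complete resolutions. At each vertex $v\in\{0,1\}^n$ the diagram becomes a disjoint union of planar circles together with the marking points of $\mathbf{p}$ sitting on these circles; to this resolved picture I would associate the group $\II^\sharp$ of the corresponding pointed unlink. The edges of the cube, which at each crossing interpolate between the $0$- and $1$-resolutions, furnish cobordism maps between these $\II^\sharp$ groups. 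Filtering the resulting hypercube complex by cube degree yields a spectral sequence converging to $\II^\sharp(L,\mathbf{p})$.

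The first technical ingredient is an unoriented skein exact triangle for $\II^\sharp(\,\cdot\,,\mathbf{p})$, analogous to the one for $\II^\natural$. Since the earrings added at the points of $\mathbf{p}$ and the additional earring-decorated unknot used to define $\II^\sharp$ all sit away from the crossing where resolution is performed, Kronheimer--Mrowka's proof of the skein triangle via a pair of pants cobordism should carry through verbatim, giving a three-periodic long exact sequence relating $\II^\sharp$ of $(L_2,\mathbf{p})$, $(L_1,\mathbf{p})$, $(L_0,\mathbf{p})$. Iterating this triangle over the $n$ crossings of $D$, exactly as in Ozsv\'ath--Szab\'o and Kronheimer--Mrowka, produces the filtered complex and hence the spectral sequence; its $E_1$-page is $\bigoplus_v \II^\sharp(D_v,\mathbf{p})$ with differential assembled from the skein edge maps.

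The second step is to identify the $E_1$- and $E_2$-pages with the pointed Khovanov complex. At each vertex $v$, the resolved diagram $D_v$ is a disjoint union of planar unknots with some subset of $\mathbf{p}$ marked on each. I would compute $\II^\sharp(D_v,\mathbf{p})$ using the non-degenerate identification with sutured instanton Floer homology (Proposition \ref{SHI=I-non-degenerate}) together with the behavior of $\SHI$ under disjoint union and under adding a meridional suture pair for each marking point. This should give a tensor product of copies of a rank-$2$ module $V$, one factor for each point of $\mathbf{p}$ (plus one extra $V$ per circle with no marking, coming from the additional unknot component used in $\II^\sharp$). This is precisely the vector space structure of the Baldwin--Levine--Sarkar complex $CKh(D_v,\mathbf{p})$ on which $Kh'$ is built. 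The edge maps from the skein triangle are induced by elementary merge/split cobordisms, and by the standard TQFT computation of these maps on the instanton side they agree, up to the small renormalization that turns $d_\mathbf{p}$ into $d'_\mathbf{p}$, with the BLS differential; this is the reason one is forced to work with $Kh'$ rather than $Kh$ itself.

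The main obstacle I expect is the last comparison: pinning down the precise sign/scalar discrepancy between the cobordism-induced maps on $\II^\sharp$ and the pointed Khovanov differential, and verifying that this discrepancy is exactly absorbed by replacing $d_\mathbf{p}$ with $d'_\mathbf{p}$. In Kronheimer--Mrowka's unpointed case this matching is already delicate because one must track how the fixed earring interacts with the merge/split cobordisms; with several marking points one must carry out this bookkeeping in parallel at every point of $\mathbf{p}$ and keep the merging and splitting rules for tensor factors of $V$ consistent with BLS's combinatorial prescription. Once this identification of the $E_2$-page is in place, convergence to $\II^\sharp(L,\mathbf{p})$ follows from the standard filtered-complex argument, completing the proof.
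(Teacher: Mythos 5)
Your plan diverges from the paper's proof in a way that creates a genuine gap. The paper does \emph{not} build the cube of resolutions by resolving only the crossings of $D$. Instead it forms the augmented diagram $D'$ for $L'=L\cup\bigcup_i m_i$ and uses the $(n+l)$-cube obtained by additionally resolving one crossing $c_i$ per earring $m_i$ (the set $M$ in Proposition~\ref{ss1}). At each vertex of this bigger cube the link $L'_v$ is an honest unpointed unlink, so $\II^\sharp(L'_v)$ is a free abelian group $V^{\otimes|L'_v|}$ and the identification with a summand of $CKh(\bar L,\mathbf{p})=\Lambda_{\mathbf{p}}\otimes CKh(\bar L)$ is exact over $\mathbb{Z}$. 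The extra $l$ cube directions exactly provide the exterior-algebra factor $\Lambda_{\mathbf{p}}$, and the edge maps across those directions are the operators $\sigma_{p_i}$, which by \eqref{sigma-U0} act as $\mathbf{v}_+\mapsto 2\mathbf{v}_-$. This factor of $2$ is precisely where $d'_{\mathbf{p}}$ (and hence $Kh'$) comes from.

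In your version the filtration only uses the $n$ crossings of $D$, so the $E_1$-page is $\bigoplus_v \II^\sharp(D_v,\mathbf{p})$. Two things go wrong here. First, $\II^\sharp(D_v,\mathbf{p})$ is \emph{not} a free abelian group in general: already $\II^\sharp(U_1,\{p\})\cong\mathbb{Z}^2\oplus\mathbb{Z}/2$, so it cannot be identified with the free group $\Lambda_{\mathbf{p}}\otimes V^{\otimes|D_v|}$ underlying the BLS chain complex over $\mathbb{Z}$. (Your appeal to Proposition~\ref{SHI=I-non-degenerate} at each vertex also does not literally apply: a resolution $D_v$ may have circles carrying no marking point, so $(D_v,\mathbf{p})$ is degenerate.) Second, and more fundamentally, even over $\mathbb{Q}$ your $E_1$-page has already taken homology with respect to the $\sigma_{p_i}$-directions ``inside'' each vertex, and the $d_1$-differential only consists of the merge/split maps; by contrast $Kh'(\bar L,\mathbf{p})$ is the homology of the \emph{total} complex $(\Lambda_{\mathbf{p}}\otimes CKh(\bar L), d'_{\mathbf{p}})$ with both kinds of differentials mixed. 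These are related only by yet another spectral sequence, so your $E_2$-page would not be $Kh'(\bar L,\mathbf{p})$. To salvage the argument you would need to switch to the paper's filtration by $\{0,1\}^{N'}$, at which point the explicit computation of $\sigma_p$ on $\II^\sharp(U_1)$ is what forces $d'_{\mathbf{p}}$ rather than $d_{\mathbf{p}}$ and yields the correct $E_2$-page.
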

We also have a spectral sequence for the reduced case (Theorem \ref{ss4}).
Theorem \ref{KHI<Kh} is proved by using the above spectral sequence and some basic homological algebra.

\emph{Acknowledgments.} 
This work is motivated by conversations with Haofei Fan on pointed Heegaard knot homology
and the author would like to thank him for his patience in explaining results
in Heegaard Floer theory to the author. The author thanks Boyu Zhang for helpful suggestions on Proposition \ref{S-RP2}.  

\section{Singular instanton Floer homology}\label{s-inst}
In this section we will give a brief review of the singular instanton Floer homology developed in \cite{KM:Kh-unknot}.

Given a link $L$ in an oriented closed 3-manifold $Y$, the pair $(Y,L)$ can be equipped with an 
an orbifold structure and an orbifold Riemannian metric such that the local stabilizer group $H_x$ is $\mathbb{Z}/2$
if $x\in L$ and trivial otherwise. Let $u$ be a 1-cycle (a collection of arcs and circles) in $Y$ such that
the  $u$ meets $L$ normally at $\partial u=u\cap L$, then $u$ determines an orbifold $SO(3)$ bundle $\check{E}$ on $(Y,L)$ 
(i.e. the \emph{singular bundle data} defined in \cite{KM:Kh-unknot}*{Section 4.2}). For $x\in L$, the  
the local stabilizer group $H_x$ acts on the $SO(3)$ fiber non-trivially. It may happen that 
$\check{E}$ does not extend to all of $Y$ as a topological bundle. But it always extends locally and there are two ways
to extend it near each point $p\in L$. The choices of extensions give us a double cover $L_\Delta \to L$ which has
$w_1=[\partial u]\in H^1(L;\mathbb{Z}/2)$. In particular, $\check{E}$ extends globally if and only if $[\partial u]=0$ 
in $H^1(L;\mathbb{Z}/2)$. The orbifold bundle $\check{E}$ only depends on $u$ on the homology level: suppose 
there is another 1-cycle $v$ with $(v,\partial v)\subset (Y,L)$ such that 
\begin{itemize}
\item $[\partial v]=[\partial u]$ in $H^1(L;\mathbb{Z}/2)$;
\item The previous assumption implies that we can deform $(u,\partial u)$, $(v,\partial v)$ in $(Y,L)$ so that
      $v\cup u$ becomes a closed 1-cycle in $Y$. We require $[u\cup v]=0$ in $H^1(Y;\mathbb{Z}/2)$.
\end{itemize}
Then the orbifold bundles determined by $u$ and $v$ are isomorphic.

The asymptotic holonomy around $L$ of an orbifold connection on $\check{E}$ is an order $2$ element in $SO(3)$. 
Even though $\check{E}$ does not necessarily extend, the gauge group $\mathcal{G}$ of determinant $1$ gauge transformations 
always does. At a point $p\in L$, the gauge tranformations take values 
in a $S^1$-subgroup of $SU(2)$ which preserves the asymptotic holonomy of the orbifold connections. 
We use $\mathcal{A}$ to denote the space of orbifold connections on $\check{E}$ (with a proper Sobolev completion)
and the quotient $\mathcal{B}:=\mathcal{A}/\mathcal{G}$ is the configuration space.

We say the triple $(Y,L,u)$ is \emph{admissible} if there is an embedded surface $\Sigma\subset Y$ such that 
either
\begin{itemize}
  \item $\Sigma$ is disjoint from $K$ and $\omega\cdot \Sigma$ is odd; or
  \item  $\Sigma$ intersects $K$ transversally and $\Sigma\cdot K$ is odd.
\end{itemize}
Given an admissible triple $(Y,L,u)$, the instanton Floer homology
$
\II(Y,L,u)
$
is defined as the Morse homology of the Chern-Simons functional (with a generic perturbation)
\begin{equation*}
CS:\mathcal{B}\to S^1
\end{equation*}

The 4-dimensional case is similar. Given a cobordism $(W,\Sigma,h)$ between two admissible triples
$(Y_0,L_0,u_0)$ and $(Y_1,L_1,u_1)$, we can equip $(W,\Sigma)$ an orbifold structure and an orbifold bundle is determined by
the 2-cycle $h$ (an embedded surface with $\partial h\subset \Sigma$). Counting the number of points in the 0-dimensional
moduli space of anti-self-dual connections on the orbifold bundle, we obtain a morphism
\begin{equation*}
\II(W,\Sigma,h):\II(Y_0,L_0,u_0)\to \II(Y_1,L_1,u_1)
\end{equation*}
which is well-defined up to an overall sign ambiguity.
As in the 3-dimensional case, the orbifold bundle $\check{E}$ on $(W,\Sigma)$ does not always extend and 
the choices of extensions at points in $\Sigma$ give us a double cover $\Sigma_\Delta\to \Sigma$ with $w_1=[\partial h]$.
According  to the discussion in \cite{KM:Kh-unknot}*{Section 4.2}, in a neighborhood 
$\text{Nbh}_{\Sigma}(p)$ of a point $p\in \Sigma\setminus h$, the restriction of the double cover $\Sigma_\Delta$ is not
only trivial but also trivialized. This means we have a preferred extension $E$ of $\check{E}$ near $p$.
We can decompose $E_p$ into the $\pm 1$ eigenspaces $\mathbb{R}$ and $K_p$ of the asymptotic holonomy of orbifold connections 
where $K_p$ is a 2-plane.
Let $N(\Sigma)_p$ be the normal bundle of $\Sigma$ at $p$.
An choice of extension at $p$ determines (and is determined by) 
an identification of $\det (N(\Sigma)_p)$ and $\det (K_p)$ (cf. \cite{KM-surface1}*{Section 2(iv)}). 
A different choice of the extension changes the identification by a sign. 
The surface $\Sigma$ is possibly non-orientable, but we can always pick an orientation at $p$. After so 
$N(\Sigma)_p$ hence $K_p$ are also oriented.

Since the gauge group $\mathcal{G}$ preserves this decomposition and the orientation of $K_p$, 
we can obtain a complex line bundle
\begin{equation*}
\mathbb{K}_p \to \mathcal{B} 
\end{equation*}
There is an equivalent way to construct this line bundle. The framed gauge group $\mathcal{G}_p\subset \mathcal{G}$
consists of gauge transformations restricted to the identity at $p$. The associated complex line bundle of the principal $S^1$-bundle
\begin{equation*}
\tilde{B}:=\mathcal{A}/\mathcal{G}_p \to \mathcal{B} 
\end{equation*}
is just $\mathbb{K}_p$. The moduli space obtained using the framed gauge group $\mathcal{G}_p$ is called the framed 
moduli space which is framed at $p$. It is a $S^1$-bundle over the unframed moduli space. 

The line bundle $\mathbb{K}_p$ is firstly considered in \cite{Kr-ob} in the situation that $\Sigma$ is oriented and
$\check{E}$ extends globally.  We use $\sigma_p$ to denote $e(\mathcal{K}_p)$. It can be represented by a divisor $V_\sigma$.
Using this class, we can define another morphism 
\begin{equation*}
\II(W,\Sigma,h;\sigma_p):\II(Y_0,L_0,u_0)\to \II(Y_1,L_1,u_1)
\end{equation*}
by
\begin{equation*}
\II(W,\Sigma,h;\sigma)(\alpha):=\sum_\beta (\# M_2(W,\Sigma,h;\alpha,\beta)\cap V_\sigma)\cdot \beta
\end{equation*}
where $M_2(W,\Sigma,h;\alpha,\beta)$ denotes the 2-dimensional moduli space with limiting connections $\alpha$ and $\beta$ on the ends.

If we change either the extension at $p$ or the orientation of $\Sigma$ at $p$, the class $\sigma_p$ will be changed by a sign.
If the branched double cover $\pi:\Sigma_\Delta \to \Sigma$ is non-trivial on the component of $\Sigma$ containing $p$, 
then we can pick a loop passing through $p$ whose lifting is an arc joining the two points in $\pi^{-1}(p)$. 
Along this arc, the bundle $\mathbb{K}_p$ is deformed into its dual $\bar{\mathbb{K}}_p$, which implies 
$\sigma_p=-\sigma_p$ hence $2\sigma_p=0$. 

We use $H$ to denote a Hopf link in $S^3$ and $w$ to denote an arc joining the two components of $H$. 
Given a link $L\subset S^3$, its instanton Floer homology is defined in \cite{KM:Kh-unknot} as
\begin{equation*}
\II^\sharp(L):=\II(S^3,L\cup H,w)
\end{equation*}
Given a cobordism $S:L_0\to L_1$ in $S^3$, we have
\begin{equation*}
\II^\sharp(S):=\II(I\times S^3,S\cup I\times H,I\times w): \II^\sharp(L_0) \to \II^\sharp(L_1)
\end{equation*}
where $I=[-1,1]$. In this way, $\II^\sharp$ becomes a functor from the category of links with morphisms the link cobordisms to
the category of abelian groups with morphisms the group homomorphisms modulo a sign.
When $S$ is an oriented cobordism between two oriented links,  $\II^\sharp(S)$ is well-defined \emph{without}
any sign ambiguity. 

Given two links $L_1$ and $L_2$, if either $\II^\sharp(L_1)$ or $\II^\sharp(L_2)$ is torsion-free, then we have
\begin{equation*}
\II^\sharp(L_1\cup L_2)=\II^\sharp(L_1)\otimes \II^\sharp(L_2)
\end{equation*}
Moreover, this isomorphism is natural with respect to split cobordisms (i.e. disjoint union of two cobordisms).

\section{Earrings and exact triangles}\label{earring+triangle}
\begin{DEF}\label{earring-def}
Let $L$ be a link in $S^3$ and $p\in L$ be a point. An \emph{earring} of $L$ at $p$ means a meridian $m$ of $L$ around $p$ together with
an arc $u$ joining $m$ to $L$ at $p$. 
\end{DEF} 

\begin{figure}
\center
\begin{tikzpicture}
\tikzset{
    partial ellipse/.style args={#1:#2:#3}{
        insert path={+ (#1:#3) arc (#1:#2:#3)}
    }
}
\draw[thick] (0,0) [partial ellipse=-265:85: 1.5cm and 1cm] ;
\draw[thick, dash pattern=on 3.9cm off 0.2cm on 100cm] (0,3) to (0,-3);
\draw[red] (0,0) to (1.5,0);

\node[below] at (0,-3.3) {$L\cup m$};

\draw[thick] (4,0) [partial ellipse=-245:65: 1.5cm and 1cm] ;
\draw[thick, dash pattern=on 1.2cm off 0.2cm on 100cm] (4,0.3) to (4,-3);
\draw[thick] (4,3) to (4,1.5); 

\draw[thick] (4,0.3) to [out=90,in=155]  (4+1.5*0.4226, 0.9063);
\draw[thick] (4,1.5) to  [out=-90, in=25]   (4-1.5*0.4226, 0.9063);

\draw[red] (4,0) to  (5.5,0);

\node[below] at (4,-3.3) {$L_1$};
\draw[thick] (8,0) [partial ellipse=-245:65: 1.5cm and 1cm] ;
\draw[thick, dash pattern=on 1.2cm off 0.2cm on 100cm] (8,0.3) to (8,-3);
\draw[thick] (8,3) to (8,1.5); 

\draw[thick] (8,0.3) to [out=90, in=25] (8-1.5*0.4226, 0.9063) ;
\draw[thick] (8,1.5) to [out=-90, in=155] (8+1.5*0.4226, 0.9063);

\draw[red] (8,0) to  (9.5,0);

\node[below] at (8,-3.3) {$L_0$};
\end{tikzpicture}
\caption{Two resolutions of link $L$ with earring $m$. The red arc represents the arc $u$.}\label{L10}
\end{figure}

Given a link $L$ with an earring $m$ at $p\in L$,
we have a ``local digram'' for $L\cup m$ near $p$ with two crossings as shown in Figure \ref{L10}.
We can resolve the top crossing by $1$- and $0$- resolutions to obtain two new links $L_1$ and $L_0$. 
Notice that  the two new links are isotopic to the original link $L$ \emph{without} the earring. 
The two copies of arc $u$ in $L_1$ and $L_0$ can be deformed into empty 1-cycles, so we have 
\begin{equation*}
\II(L_i\cup m \cup H, u+w)= \II^\sharp(L),~ i=1,0 
\end{equation*}   
Using Kronheimer-Mrowka's unoriented skein exact triangle \cite{KM:Kh-unknot}*{Section 6}, we have a 3-cyclic exact sequence
\begin{equation}\label{L-L1-L0}
\xymatrix{
\cdots\to \II(L\cup m \cup H ,u+w)\to \II^\sharp(L) \ar[r]^-{f} &\II^\sharp(L) \to \cdots
}
\end{equation}  
The map $f$ is induced by a cobordism from $(L_1,u)$ to $(L_0,u)$. This cobordism can be obtained by attaching a band
to the product cobordism $(I\times L_1,I\times u)$ along $\{1\}\times L_1$ as shown in Figure \ref{L1-band}.
Even though the arc $u$ can be deformed into an empty arc on the two ends, the
2-cycle $I\times u$ represents non-trivial singular bundle data on the cobordism 
$$
(I\times S^3, S): (S^3,L_1)\to (S^3,L_0)
$$  

\begin{figure}
\center
\begin{tikzpicture} 
\tikzset{
    partial ellipse/.style args={#1:#2:#3}{
        insert path={+ (#1:#3) arc (#1:#2:#3)}
    }
}

\draw[thick] (4,0) [partial ellipse=-245:64: 1.5cm and 1cm] ;
\draw[thick, dash pattern=on 1.2cm off 0.2cm on 100cm] (4,0.3) to (4,-3);
\draw[thick] (4,3) to (4,1.5); 

\draw[thick] (4,0.3) to [out=90,in=156]  (4+1.5*0.4426, 0.8953);
\draw[thick] (4,1.5) to  [out=-90, in=25]   (4-1.5*0.4226, 0.9063);

\draw[fill=gray!50, opacity=1] (4,0.3) to [out=90,in=156]  (4+1.5*0.4426, 0.8953) to [in=-90, out=155] (4,1.5)
                 to [out=-90, in=25]   (4-1.5*0.4226, 0.9063) to [in=90, out=25]  (4,0.3);

\draw[red] (4,0) to  (5.5,0);

\draw[thick] (10,3) to (10,-3);

\draw[thick] (10,2.5) to [out=0,in=90] (11.5,0.5) to [out=-90, in= 0] (10,-1.5);
\draw[thick,dash pattern=on 2cm off 0.4cm on 100cm] (10,1.8) to [out=0,in=100] (11.5,0.5) to [out=-80,in=0] (10,-2);
\draw[red] (10,-1.75) [partial ellipse=90:270:1cm and 1cm];

\path[fill=gray!50, opacity=0.7] (10,2.5) to [out=0,in=90] (11.5,0.5) to [in=0,out=100] (10,1.8);
\path[fill=gray!50, opacity=0.7] (10,-2) to  [in=-80,out=0] (11.5,0.5) to [out=-90, in= 0] (10,-1.5);

\draw[thick,-latex] (6.5,0) to (8,0);
\node[above] at (7.2,0) {untwist};
\end{tikzpicture}
\caption{Attach a band to $L_1$. Untwist $L_1$ to obtain a straight line attached with a twisted band.}\label{L1-band}
\end{figure}

The topology of $S$ is not hard to understand. If we ignore the embedding, $S$ is just the connected sum of $I\times L$ and
$\mathbb{RP}^2$. Or equivalently, $S$ can be obtained by removing a disk from the product cobordism $I\times L$ and then
attaching a M\"{o}bius band. The restriction of $I\times u$ to $S$ can be deformed into the core circle of the M\"{o}bius band. 

The surface $S$ is a product cobordism away from a neighborhood of $I\times \{p\}$. 
We can untwist $L_1$  to make it locally into a straight line in $\mathbb{R}^2=\mathbb{R}^2\times \{0\}$, which
is depicted in Figure \ref{L1-band}. 
This means the cobordism $S$ in $N(I\times \{p\})$ can be described as attaching a \emph{right-handed half-twisted} band to
product cobordism $I\times I\subset I\times \mathbb{R}^3$ along $\{1\}\times I$. The above discussion implies the following
(cf. \cite{KM:Kh-unknot}*{Lemma 7.2} and the Hopf link example in \cite{KM-filtration}*{Section 11}). 

\begin{PR}\label{S-RP2}
If $S$ be the cobordism as above, then we have
\begin{equation*}
(I\times S^3, S)=(I\times S^3, V)\# (S^4, \mathbb{RP}^2)
\end{equation*} 
where $V$ is isotopic to the product cobordism $I\times L$ and 
the $\mathbb{RP}^2$ is standardly embedded in $S^4$ with self-intersection $-2$. Moreover, the restriction
of the 2-cycle $I\times u$ to $\mathbb{RP}^2$ is non-trivial in $H_1(\mathbb{RP}^2; \mathbb{Z}/2)$. 
\end{PR}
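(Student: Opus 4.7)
The plan is purely local: the surface $S$ agrees with the product cobordism $I \times L$ away from a 4-ball neighborhood of $I \times \{p\}$, so the connected-sum decomposition should arise entirely from a local analysis of the twisted band attachment depicted on the right of Figure \ref{L1-band}.

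First I would fix a closed 4-ball $B \subset I \times S^3$ containing the band attachment region, with boundary 3-sphere $\Sigma = \partial B$ meeting $S$ transversely. Inside $B$, the surface $S \cap B$ consists of a product strip $I \times I$ together with a right-handed half-twisted band attached along one side of the strip. After an ambient isotopy of $B$ rel $\partial B$, this piece can be pushed to a standard model and recognized as an unknotted M\"obius band in $B$, i.e.\ a M\"obius band whose boundary is an unknot in $\Sigma \cong S^3$. Simultaneously, $S \cap (I \times S^3 \setminus \mathring{B})$ is the complement of a small open disk in a surface that (outside $B$) is already a product. Filling $B$ back in by a flat disk bounding this unknot undoes the band attachment, producing an embedded cobordism $V$ which is isotopic to $I \times L$.

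By the very definition of connected sum of embedded surface pairs, the equality $(I \times S^3, S) = (I \times S^3, V)\#(S^4, P)$ then holds, where $P$ is the closed surface obtained by doubling the M\"obius band $S \cap B$ along its unknotted boundary. Since the M\"obius band was unknotted, $P$ is the standard $\mathbb{RP}^2 \subset S^4$. The only substantive calculation, and the main obstacle, is to pin down the normal Euler number: the framing of a right-handed half-twisted band is what records the self-intersection of $P$ in $S^4$, and one computes it to be $-2$. This is exactly the model computation carried out in the Hopf-link example of \cite{KM-filtration}*{Section 11} and in \cite{KM:Kh-unknot}*{Lemma 7.2}, and I would simply appeal to that local model rather than redo the framing bookkeeping.

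Finally, for the statement about $I \times u$: the arc $u$ meets $L$ only at $p$, so $I \times u$ meets $B$ transversely in a single arc, and under the isotopy that straightens $S \cap B$ to a standard M\"obius band this arc becomes the core arc of the band. After capping off to form $P = \mathbb{RP}^2$ in $S^4$, this arc closes up to the core circle of the M\"obius band, which represents the non-trivial class in $H_1(\mathbb{RP}^2;\mathbb{Z}/2)$. This gives the last assertion of the proposition.
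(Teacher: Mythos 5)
Your proof is correct and follows essentially the same route as the paper: observe that $S$ agrees with the product cobordism away from a 4-ball neighborhood of $I\times\{p\}$, identify the local picture as a right-handed half-twisted band attached to the product disk (hence a M\"obius band), and appeal to the Hopf-link model in \cite{KM-filtration}*{Section 11} and \cite{KM:Kh-unknot}*{Lemma 7.2} for the $-2$ normal Euler number. One small terminology slip: ``doubling the M\"obius band along its unknotted boundary'' would produce a Klein bottle, not $\mathbb{RP}^2$; what you intend (and what the connected-sum description forces) is \emph{capping off} the M\"obius band $S\cap B$ with the flat disk $D'$ on the other side of $\partial B$ inside the 4-ball complementary to $B$ in $S^4$, so that $P=(S\cap B)\cup_{\partial} D'$. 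With that clarification the argument matches the paper's discussion.
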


We want to use a gluing argument to study 
$$
f=\II(I\times S^3, S\cup I\times H,I\times u +I\times w)
$$
Before that, we need to know some facts on the minimal energy moduli space $M(S^4,\mathbb{RP}^2,\eta)$ where 
$ \eta \subset S^4$ is a 2-cycle and $\partial \eta\subset \mathbb{RP}^2$ is non-trivial in  $H_1(\mathbb{RP}^2; \mathbb{Z}/2)$.
This space has been studied in \cite{KM:Kh-unknot}*{Section 2.7}. It consists of the unique flat connection $[B]$ on 
$S^4\setminus \mathbb{RP}^2$ with the holonomy around $\mathbb{RP}^2$ to be an order $2$ element in $SO(3)$. 
As a reducible connection, the stabilizer $\Gamma_B$ is $S^1$. 

The index of the deformation complex of $B$ is $-3$, which can be calculated with the help of the
branched double cover of $(S^4,\mathbb{RP}^2)$.
The branched double cover of $(S^4,\mathbb{RP}^2)$ with $\mathbb{RP}^2\cdot \mathbb{RP}^2=-2$ is 
the complex projective plane $\mathbb{CP}^2$ and the covering involution is the complex conjugation. 
The lifting of $[B]$ to $\mathbb{CP}^2$ is the trivial flat connection on the rank $3$ trivial bundle 
$\underline{\mathbb{R}}\oplus \underline{\mathbb{C}}$. We also assume $\mathbb{CP}^2$ is equipped with the standard metric.
 The covering involution acts trivially on $\underline{\mathbb{R}}$
and acts as $-1$ on $\underline{\mathbb{C}}$. The obstruction space $H^2_B$ can be calculated as the invariant part of 
$H^2_+(\mathbb{CP}^2;\underline{\mathbb{R}}\oplus \underline{\mathbb{C}})$. We have 
$$H^2_+(\mathbb{CP}^2;\mathbb{R})=H^2(\mathbb{CP}^2;\mathbb{R})$$ 
is generated by the standard K\"{a}hler form  $\omega$, which satisfies $\bar{\omega}=-\omega$. Therefore the invariant part
of $H^2_+(\mathbb{CP}^2;\underline{\mathbb{R}}\oplus \underline{\mathbb{C}})$ consists of $\omega\otimes s$ where $s$ is
an arbitrary constant section of $\underline{\mathbb{C}}$. Similarly, 
$H^1(\mathbb{CP}^2;\underline{\mathbb{R}}\oplus \underline{\mathbb{C}})=0$ implies $H^1_B=0$. 

\begin{LE}\label{key}
Let $(S^4,\mathbb{RP}^2,\eta)$ be given as above. 
Suppose $(X,\Sigma, h)$ be an admissible cobordism and $p$ is a point on $\Sigma\setminus h$. Then we have
\begin{equation*}
\II((X,\Sigma,h)\#(S^4,\mathbb{RP}^2,\eta))=\pm\II(X,\Sigma,h;\sigma_p)
\end{equation*}
where the connected sum is taken at $p$ and a point $q\in \mathbb{RP}^2\setminus \eta$.
\end{LE}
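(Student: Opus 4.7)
The plan is a neck-stretching gluing argument: after forming the connected sum $(X,\Sigma,h)\#(S^4,\mathbb{RP}^2,\eta)$ with a long neck, the 0-dimensional part of the glued moduli space is cut out by the zero set of an obstruction section of a complex line bundle over $M_2:=M_2(X,\Sigma,h;\alpha,\beta)$, and that line bundle is canonically the restriction of $\mathbb{K}_p$.

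First I would stretch the neck and analyze the limits of ASD connections contributing to the 0-dimensional glued moduli space. Because $[B]$ is the unique minimal-energy configuration on $(S^4,\mathbb{RP}^2,\eta)$ and the next stratum costs strictly more action, any such limit must split as a pair $(A,B)$ with $A\in M_2(X,\Sigma,h;\alpha,\beta)$ and $B$ the reducible flat connection described above; bubbling at the connected-sum neck is excluded for action reasons, and admissibility of $(X,\Sigma,h)$ ensures that $A$ can be taken to be irreducible with $\Gamma_A=\{\pm 1\}$.

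Second I would run the obstruction-bundle gluing analysis, in the same spirit as the one used in \cite{KM:Kh-unknot} for the earring construction. Since $H^1_B=0$, the pregluing construction produces a smooth map from a neighborhood of $M_2$ in the approximate moduli space to a cokernel bundle with fiber $H^2_B$, whose zeros are in bijection with true ASD connections on the glued orbifold. After accounting for the residual framing/gauge symmetries, this reduces to a smooth section $\mathfrak{s}$ of a complex line bundle over $M_2$ whose fiber is $H^2_B\cong\mathbb{C}$, regarded as the standard representation of $\Gamma_B=S^1$. Matching the asymptotic decomposition $E_q=\underline{\mathbb{R}}\oplus\underline{\mathbb{C}}$ at $q$ used to compute $H^2_B$ with the decomposition $E_p=\underline{\mathbb{R}}\oplus K_p$ that defines $\mathbb{K}_p$ identifies this line bundle with $\mathbb{K}_p|_{M_2}$, up to sign.

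Finally, since $e(\mathbb{K}_p)=\sigma_p$, the zero set of a generic such $\mathfrak{s}$ represents $\sigma_p$ in $M_2$, and its signed count equals $\#(M_2\cap V_\sigma)$; these zeros are exactly the true ASD connections on the glued orbifold with limits $\alpha$ and $\beta$, so summing over $\beta$ yields the claimed equality, up to the overall sign coming from orientation choices in the gluing. The main obstacle is tracking the $\Gamma_B=S^1$-action through the pregluing precisely enough to identify the obstruction line bundle with $\mathbb{K}_p$ rather than its dual or some other power; the analytic skeleton itself (pregluing, Newton iteration, transversality) is standard and parallels \cite{KM:Kh-unknot}, and the identification follows from the explicit description of $B$ via the branched double cover $\mathbb{CP}^2\to(S^4,\mathbb{RP}^2)$ together with the $\Gamma_B$-eigenspace decomposition of $E_q$, with any residual sign ambiguity absorbed into the $\pm$ of the statement.
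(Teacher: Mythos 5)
Your proposal is correct and follows essentially the same route as the paper's proof. Both arguments run the obstruction-bundle gluing: energy considerations and the compactness of $M_2(X,\Sigma,h)$ force the limiting configurations to split as $(A,B)$ with $A\in M_2$ and $B$ the unique reducible on $(S^4,\mathbb{RP}^2,\eta)$; since $H^1_B=0$ and $H^2_B\cong\mathbb{C}$ carries the standard $\Gamma_B=S^1$ representation, the glued $0$-dimensional moduli space is the zero set of a section of the associated complex line bundle $\widetilde M_2\times_{S^1}H^2_B$ over $M_2$, and matching the $S^1$-actions at the gluing point identifies this bundle with $\mathbb{K}_p$ (up to sign), so that the count equals $e(\mathbb{K}_p)[M_2]=\pm\II(X,\Sigma,h;\sigma_p)$. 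The paper states this a bit more tersely (and reduces notation by first treating the closed case), leaning on \cite{DK}*{Proposition 9.3.13} and \cite{Kr-ob}*{Proposition 5.1} for the analytic details, but the structure and the key identification of the cokernel bundle with $\mathbb{K}_p$ are the same as in your write-up.
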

\begin{proof}
To simplify the notation in the discussion, we assume $(X,\Sigma)$ is a closed pair so that 
we need to prove 
\begin{equation*}
D((X,\Sigma,h)\#(S^4,\mathbb{RP}^2,\eta))=\pm D(X,\Sigma,h)(\sigma_p)
\end{equation*}
where the left-hand-side is the Donaldson invariant obtained by counting points in the
 0-dimensional moduli spaces and the right-hand-side
is  obtained by cutting down the 2-dimensional moduli spaces using $V_\sigma$ and counting points.

We use $M_2(X,\Sigma,h)$ to denote the 2-dimensional moduli space. It is compact since there is no bubble for dimension reason.
Let $\widetilde{M}_2(X,\Sigma,h)$ be
corresponding framed moduli space which is framed at $p$. There is an $S^1$-action on  
$\widetilde{M}_2(X,\Sigma,h)$ which makes it into a principal $S^1$ bundle over $M_2(X,\Sigma,h)$.

Since the minimal energy moduli space $M(S^4,\mathbb{RP}^2,\eta)$ consists of a single element $[B]$ which is a $S^1$-reducible connection,
the $S^1$-action on $\widetilde{M}(S^4,\mathbb{RP}^2,\eta)$ is trivial. Here we use
$\widetilde{M}(S^4,\mathbb{RP}^2,\eta)$ to denote the moduli space framed at $q$. The $S^1$-action comes from 
the quotient $\mathcal{G}/\mathcal{G}_q$ where $\mathcal{G}$ is the gauge group and $\mathcal{G}_q$ is the framed gauge group.
In particular, the $S^1$ acts on $E_q=\mathbb{R}\oplus \mathbb{C}$ in the 
standard way: the action is trivial on $\mathbb{R}$ and standard on $\mathbb{C}$. Combined with the description of
the obstruction space $H^2_B=\mathbb{C}$ we know that $S^1$ acts on it in the standard way. 

Dimension counting and gluing theory for obstructed cases (see \cite{DK}*{Proposition 9.3.13}) show that
the 0-dimensional moduli space 
$$M((X,\Sigma,h)\#(S^4,\mathbb{RP}^2,\eta))$$ 
is the zero set of a section of the complex line bundle
\begin{equation*}
\mathbb{K}=\widetilde{M}_2(X,\Sigma,h)\times _{S^1} H^2_B \to {M}_2(X,\Sigma,h)
\end{equation*}
According to Section \ref{s-inst}, this complex line bundle is exactly the bundle used to define $\sigma_p$. So we have
\begin{equation*}
\# M((X,\Sigma,h)\#(S^4,\mathbb{RP}^2,\eta)) = e(\mathbb{K}) [{M}_2(X,\Sigma,h)]= \pm D(X,\Sigma,h)(\sigma_p)
\end{equation*}
\end{proof}
A very similar result is discussed in \cite{Kr-ob}*{Proposition 5.1}: using the pair $(S^4,T^2)$ (with trivial bundle on it)
instead of $(S^4,\mathbb{RP}^2,\eta)$, Lemma \ref{key} still holds. More precisely, we have
\begin{equation}\label{sigma-T2}
\II((X,\Sigma,h)\#(S^4,T^2))=\II(X,\Sigma,h;\sigma_p)
\end{equation} 
This time there is no plus-minus sign since the sign ambiguities on the two sides cancel with each other. 
Even though the situations are different, the 
minimal energy moduli spaces $M(S^4,\mathbb{RP}^2,\eta)$ and $M(S^4,T^2)$ share some common formal properties:
\begin{itemize}
\item They consist of the unique $S^1$-reducible flat connection with deformation complex of index $-3$;
\item The elements in the obstruction space 
can be described as the tensor product  of the unique anti-self-dual harmonic 2-form and a section of a trivial complex line bundle
on the branched double cover.
\end{itemize}  
Indeed the proof of Lemma \ref{key} can be carried over almost verbatim from the proof of \cite{Kr-ob}*{Proposition 5.1} since
only those formal properties play a role in the proof. 

Given any admissible triple $(Y,L,u)$ and pick a point $p\in L\setminus u$ with an choice of orientation of $L$ at $p$,
there is an operator
\begin{equation*}
\sigma_p: \II(Y,L,u)\to \II(Y,L,u)
\end{equation*} 
defined in \cite{KM:Kh-unknot}*{Section 8.3} as the map
\begin{equation*}
\sigma_p:= \II(I\times Y, I\times L\# T^2, I\times u)
\end{equation*} 
where the connected sum is taken at $(0,p)\in I\times L$. By \eqref{sigma-T2}, we have
\begin{equation}\label{sigma=sigma}
\sigma_p= \II(I\times Y, I\times L, I\times u;\sigma_p)
\end{equation} 
Notice that the two $\sigma_p$'s in the above equality represent different things: the first $\sigma_p$ is a map and 
the second $\sigma_p$ is a cohomology class in the configuration space. We abuse the notation essentially because of \eqref{sigma=sigma}.
Now by Proposition \ref{S-RP2}, Lemma \ref{key} and \eqref{sigma=sigma}, we have 
\begin{PR}\label{f=sigma}
The map  $f$ in \eqref{L-L1-L0} is equal to $\pm \sigma_p$.
\end{PR}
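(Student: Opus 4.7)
The plan is to combine Proposition \ref{S-RP2}, Lemma \ref{key}, and the identity \eqref{sigma=sigma} in a three-step argument. First I would unpack the definition of $f$: by construction
\begin{equation*}
f = \II(I\times S^3,\, S\cup I\times H,\, I\times u + I\times w),
\end{equation*}
where $(I\times S^3, S\cup I\times H, I\times u+I\times w)$ is a cobordism from $(L_1\cup H, u+w)$ to $(L_0\cup H, u+w)$, and both $L_i$ are isotopic to $L$ (without the earring).

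Next I would invoke Proposition \ref{S-RP2} to decompose the cobordism as a connected sum at a point $p\in S\setminus (I\times u)$:
\begin{equation*}
(I\times S^3,\, S,\, I\times u) \;=\; (I\times S^3,\, V,\, (I\times u)|_V)\,\#\,(S^4,\,\mathbb{RP}^2,\,\eta),
\end{equation*}
where $V$ is isotopic to the product cobordism $I\times L$ and $\eta$ is non-trivial in $H_1(\mathbb{RP}^2;\mathbb{Z}/2)$. Since the non-trivial part of $I\times u$ is swallowed by $\eta$, its restriction $(I\times u)|_V$ is homologically trivial, so it can be deformed into an empty $1$-cycle; in particular $(V\cup I\times H,(I\times u)|_V + I\times w)$ represents the same singular bundle data as the product cobordism on $(L\cup H, w)$. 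The Hopf link and its arc $w$ are disjoint from the neighborhood where the connected sum is performed and come along as passive spectators.

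Now apply Lemma \ref{key} with $(X,\Sigma,h)=(I\times S^3,\, V\cup I\times H,\, (I\times u)|_V + I\times w)$ and connected sum point $p\in V\setminus (I\times u)$. This yields
\begin{equation*}
f \;=\; \pm\,\II\bigl(I\times S^3,\, V\cup I\times H,\, (I\times u)|_V + I\times w;\,\sigma_p\bigr).
\end{equation*}
After isotoping $V$ to $I\times L$ and deforming $(I\times u)|_V$ to nothing, the right hand side equals $\II(I\times S^3, I\times (L\cup H), I\times w;\sigma_p)$, which is precisely the map $\sigma_p$ acting on $\II(S^3,L\cup H,w) = \II^\sharp(L)$ by equation \eqref{sigma=sigma} (applied to the triple $(S^3,L\cup H,w)$). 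This proves $f = \pm\sigma_p$.

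The main obstacle is bookkeeping rather than substance: one must carefully check that the $1$-cycle $(I\times u)|_V$ can indeed be trivialized on $V$ (so that $V$, together with its singular bundle data, really is the identity cobordism on $(L\cup H,w)$) and that the point $p$ can be chosen on $V\setminus (I\times u)$ with a coherent orientation and extension, so that $\sigma_p$ on the right hand side of Lemma \ref{key} matches the class $\sigma_p$ used to define the operator on $\II^\sharp(L)$. Both are straightforward once one recalls that the non-trivial homology of $I\times u$ is entirely absorbed by $\eta$ under the connected sum decomposition of Proposition \ref{S-RP2}.
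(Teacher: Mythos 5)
Your proposal is correct and follows precisely the route the paper takes: the text proves Proposition \ref{f=sigma} by simply citing Proposition \ref{S-RP2}, Lemma \ref{key}, and \eqref{sigma=sigma}, and your three-step argument is a faithful unpacking of exactly that chain. The only small slip is calling $(I\times u)|_V$ a ``$1$-cycle''; the singular bundle data on the cobordism is carried by the $2$-cycle $I\times u$, and what you are really using is that its restriction to $V$ represents trivial singular bundle data once the $\mathbb{RP}^2$ summand has absorbed the nontrivial part, which is the intended reading of Proposition \ref{S-RP2}.
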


\section{Instanton Floer homology for pointed links}
We use $U_k$ to denote the unlink with $k$ components in $S^3$. We fix a generator 
\begin{equation*}
\mathbf{u}_0\in \II(U_0)\cong \mathbb{Z}
\end{equation*}
where $U_0$ is the empty link.
Let $D^\pm$ be standard disks in $I\times S^3$ which give oriented cobordisms from $U_0$ to $U_1$ and $U_1$ to $U_0$ respectively.
Similarly, let $\Sigma^\pm$ be standard punctured tori in $I\times S^3$
which give oriented cobordisms from $U_0$ to $U_1$ and $U_1$ to $U_0$ respectively.
\begin{PR}[{{\cite{KM:Kh-unknot}*{Section 8}}}]\label{unknot-gen}
There are generators $\mathbf{v}_+$ and $\mathbf{v}_-$ for $\II(U_1)\cong \mathbb{Z}^2$ characterized by
\begin{equation*}
  \II(D^+)(\mathbf{u}_0)=\mathbf{v}_+, ~~\II(D^-)(\mathbf{v}_-) = \mathbf{u}_0
\end{equation*}
We also have
\begin{equation*}
  \II(\Sigma^+)(\mathbf{u}_0) = 2\mathbf{v}_-,~~\II(\Sigma^-)(\mathbf{v}_+) = 2\mathbf{u}_0
\end{equation*}
In terms of those generators, the map induced by the pair of pants cobordism from $U_2$ to $U_1$ is given by
\begin{align*}
  \mathbf{v}_+ \otimes \mathbf{v}_+ &\mapsto \mathbf{v}_+,~~ \mathbf{v}_- \otimes \mathbf{v}_+ \mapsto \mathbf{v}_- \\
   \mathbf{v}_+ \otimes \mathbf{v}_- &\mapsto \mathbf{v}_-,~~ \mathbf{v}_-\otimes \mathbf{v}_- \mapsto  0
\end{align*}
The map induced by the pair of pants cobordism from $U_1$ to $U_2$ is given by
\begin{align*}
  \mathbf{v}_+ &\mapsto \mathbf{v}_+\otimes \mathbf{v}_- + \mathbf{v}_-\otimes \mathbf{v}_+  \\
  \mathbf{v}_- &\mapsto \mathbf{v}_-\otimes \mathbf{v}_-
\end{align*}
\end{PR}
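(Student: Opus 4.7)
\emph{Proof plan.} The plan is to reproduce the computation of \cite{KM:Kh-unknot}*{Section 8}, proceeding in three steps.

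First, I would establish $\II(U_1) \cong \mathbb{Z}^2$ via Kronheimer--Mrowka's unoriented skein exact triangle applied to a standard local move, combined with the already-fixed $\II(U_0) \cong \mathbb{Z}$. Once the rank is in hand, set $\mathbf{v}_+ := \II(D^+)(\mathbf{u}_0)$. The composition $D^- \circ D^+$ is a standardly embedded $2$-sphere cobordism from $U_0$ to $U_0$; as in Khovanov's TQFT, this sphere evaluates to zero, so $\II(D^-)(\mathbf{v}_+) = 0$ and $\mathbf{v}_+$ lies in $\ker \II(D^-)$. Surjectivity of $\II(D^-)$ (verified, for instance, by exhibiting a cobordism right-inverse) then yields a class $\mathbf{v}_-$ with $\II(D^-)(\mathbf{v}_-) = \mathbf{u}_0$, and $\{\mathbf{v}_+, \mathbf{v}_-\}$ is a basis.

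Second, for the punctured torus cobordisms I would use that topologically $\Sigma^\pm = D^\pm \,\#\, T^2$, with the connected sum taken at an interior point of $D^\pm$. Applying equation \eqref{sigma-T2},
\begin{equation*}
\II(\Sigma^+)(\mathbf{u}_0) \;=\; \II(I\times S^3,\, D^+;\, \sigma_p)(\mathbf{u}_0) \;=\; \sigma_p(\mathbf{v}_+),
\end{equation*}
and analogously $\II(\Sigma^-)(\mathbf{v}_+) = \II(D^-)(\sigma_p(\mathbf{v}_+))$. The claim thus reduces to computing the action of $\sigma_p$ on $\II(U_1)$, which I would carry out by analyzing the $2$-dimensional framed moduli space on $(S^3, U_1 \cup H, w)$ cut down by the divisor representing $\sigma_p$, in the style of Kronheimer's $\mu$-map calculations in \cite{Kr-ob}; the count yields $\sigma_p(\mathbf{v}_+) = 2\mathbf{v}_-$ and $\sigma_p(\mathbf{v}_-) = 0$, whence the two torus formulas follow.

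Third, the pair of pants maps are determined by the Frobenius algebra structure on $\II(U_1)$. Using the split formula $\II(U_2) \cong \II(U_1) \otimes \II(U_1)$, the unit $\II(D^+)$ and counit $\II(D^-)$ make $\II(U_1)$ into a commutative Frobenius algebra with unit $\mathbf{v}_+$. Unitality determines $\mathbf{v}_+\otimes \mathbf{v}_\pm \mapsto \mathbf{v}_\pm$, and the product $\mathbf{v}_- \otimes \mathbf{v}_- \mapsto 0$ is then forced by the rank-$2$ constraint together with the vanishing $\II(D^-)(\mathbf{v}_+)=0$. The comultiplication formulas follow as the Frobenius dual of multiplication; their integer coefficients are pinned down by decomposing the punctured torus $\Sigma^+$ as pants followed by pants-with-cap, and matching against the already-established $\II(\Sigma^+)(\mathbf{u}_0) = 2\mathbf{v}_-$.

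I expect the technical heart of the argument to be the explicit moduli-space count yielding $\sigma_p(\mathbf{v}_+) = 2\mathbf{v}_-$; all of the other formulas reduce to algebraic consequences of the TQFT axioms and the Frobenius structure once this computation is in place.
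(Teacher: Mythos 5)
Your overall strategy is sound and, as far as I can tell, roughly tracks the route Kronheimer and Mrowka take in \cite{KM:Kh-unknot}*{Section 8}: establish the rank, pin down the generators via $D^\pm$, realize $\Sigma^\pm$ as $D^\pm \,\#\, T^2$ so that the torus computations reduce to the $\sigma_p$ operator, and then read off the Frobenius algebra. But two of the links in your chain of deductions do not actually hold as stated.

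First, in Step 1 the assertion that ``as in Khovanov's TQFT, this sphere evaluates to zero'' is circular: the statement that the counit vanishes on the unit is precisely part of the Frobenius structure you are trying to establish for $\II^\sharp$, so you cannot import it from Khovanov's theory. The correct justification is intrinsic to the instanton side: $\II^\sharp$ carries a $\mathbb{Z}/4$ grading, $\II^\sharp(U_0)\cong\mathbb{Z}$ is supported in a single grading, and the cobordism map of the closed sphere shifts degree by $2 \bmod 4$, so it lands in the zero group. A similar grading (or Euler-characteristic) argument is also what guarantees that $\II(D^-)$ is surjective onto $\II(U_0)$, which you cite but leave as an aside.

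Second, in Step 3 the claim that ``$\mathbf{v}_-\otimes\mathbf{v}_- \mapsto 0$ is forced by the rank-$2$ constraint together with the vanishing $\II(D^-)(\mathbf{v}_+)=0$'' is false. Write $\mathbf{v}_-^2 = a\mathbf{v}_+ + b\mathbf{v}_-$. The Frobenius pairing $\langle x,y\rangle = \II(D^-)(xy)$ has matrix $\left(\begin{smallmatrix}0&1\\1&b\end{smallmatrix}\right)$, which is unimodular for \emph{every} $b$, so nondegeneracy together with the rank do not constrain $a$ or $b$ at all. What does work is to feed the $\Sigma^+$ formula into the identity $m\circ\Delta$ (giving $b=0$) and then to use the $\sigma_p$ operator you computed in Step 2: since $\sigma_p$ is a module endomorphism for the $\II^\sharp$-action on cobordisms, one has
\begin{equation*}
2\,\mathbf{v}_-^2 \;=\; \sigma_p(\mathbf{v}_+)\cdot \mathbf{v}_- \;=\; \sigma_p\bigl(\mathbf{v}_+\cdot\mathbf{v}_-\bigr) \;=\; \sigma_p(\mathbf{v}_-) \;=\; 0,
\end{equation*}
and since $\II(U_1)\cong\mathbb{Z}^2$ is torsion-free this gives $\mathbf{v}_-^2=0$. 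So the ingredient you actually need is the $\sigma_p$ computation you already did, not the counit vanishing. You should make this explicit, because your closing paragraph asserts that all remaining formulas are ``algebraic consequences of the TQFT axioms and the Frobenius structure,'' and that is not true without invoking $\sigma_p$ and torsion-freeness.
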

If $p$ is a point on $U_1$, the cobordism used to define $\sigma_p$ can be described as attaching $\Sigma_+$ along a circle on the incoming end
of a pair of pants cobordism. The functoriality of $\II^\sharp$ and Proposition \ref{unknot-gen}
imply $\sigma_p:\II^\sharp(U_1)\to \II^\sharp(U_1)$ is given by
\begin{equation}\label{sigma-U0}
\mathbf{v}_+\to 2\mathbf{v}_-,~~\mathbf{v}_- \to 0 
\end{equation}
Another application of Proposition \ref{unknot-gen} is the following.
\begin{PR}\label{sigma-Z2-vanishing}
Suppose $L$ is an arbitrary link in $S^3$ and $u$ is a 1-cycle in $S^3$ with $\partial u\subset L$. Take a point $p\in L\setminus u$.
 Then we have the operator 
\begin{equation*}
\sigma_p=\II^\sharp(I\times L\# T^2,I\times u;\mathbb{Z}/2)=\II(I\times S^3, I\times L\# T^2,I\times u\cup I\times w;\mathbb{Z}/2)
\end{equation*}
on $\II^\sharp(L,u;\mathbb{Z}/2)$ is zero.
\end{PR}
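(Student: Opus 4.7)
The strategy is to show that the integral operator $\sigma_p$ on $\II^\sharp(L,u;\mathbb{Z})$ has image in $2\,\II^\sharp(L,u;\mathbb{Z})$; vanishing with $\mathbb{Z}/2$ coefficients then follows by naturality of mod-$2$ reduction, since $\II^\sharp$ is computed from a free $\mathbb{Z}$-chain complex. The seed of this divisibility is equation \eqref{sigma-U0}: on $\II^\sharp(U_1)=\mathbb{Z}\langle \mathbf{v}_+,\mathbf{v}_-\rangle$, the operator $\sigma_p$ sends $\mathbf{v}_+\mapsto 2\mathbf{v}_-$ and $\mathbf{v}_-\mapsto 0$, so its image lies in $2\,\II^\sharp(U_1)$.

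To transfer this divisibility from $U_1$ to an arbitrary link $L$, I would factor the cobordism $(I\times S^3,\,I\times L\#T^2,\,I\times u\cup I\times w)$ defining $\sigma_p$ as a composition of three cobordisms, all supported in a small $4$-ball around $(0,p)$. The first, $C_0:L\to L\sqcup U_1$, attaches a disjoint birth disk $D^+$ producing a small unknot $U_1$ next to $p$. The second, $M:L\sqcup U_1\to L\sqcup U_1$, is a split cobordism with underlying surface $(I\times L)\sqcup(I\times U_1\#T^2)$, the $T^2$ connect sum being taken at a point on the $U_1$ cylinder. The third, $C_1:L\sqcup U_1\to L$, is a $1$-handle saddle cobordism fusing $U_1$ back into $L$ near $p$. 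The birth disk in $C_0$ and the saddle in $C_1$ form a handle-cancellation pair, so $C_1\circ C_0$ is isotopic rel boundary to the product cobordism $I\times L$; carrying the $T^2$ summand through this canceling bubble then reproduces $I\times L\#T^2$, with the connect sum landing at the cancellation site, which can be arranged to coincide with $(0,p)$. A matching Euler characteristic count $1+(-2)+(-1)=-2=\chi(I\times L\#T^2)$ corroborates this. The $1$-cycle $I\times u$ and the Hopf link $I\times H$ with its arc $I\times w$ lie outside the ball and are carried unchanged through all three stages.

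The induced map now factors as $\sigma_p=\II(C_1)\circ\II(M)\circ\II(C_0)$. By Proposition \ref{unknot-gen}, $\II(C_0)(x)=x\otimes\mathbf{v}_+$ under the K\"unneth isomorphism $\II^\sharp(L\sqcup U_1)\cong \II^\sharp(L)\otimes\II^\sharp(U_1)$, which is available because $\II^\sharp(U_1)$ is torsion-free. Naturality of this isomorphism with respect to split cobordisms gives $\II(M)=\idd_L\otimes\sigma_p^{U_1}$, so $\II(M)(x\otimes\mathbf{v}_+)=2(x\otimes\mathbf{v}_-)$. Applying $\II(C_1)$ yields
$$
\sigma_p(x)\;=\;2\,\II(C_1)(x\otimes\mathbf{v}_-)\;\in\;2\,\II^\sharp(L,u;\mathbb{Z}),
$$
and reduction modulo $2$ completes the argument.

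The main obstacle will be making the handle-cancellation isotopy fully rigorous in the singular-bundle-data setting, and in particular checking that the identification $C_1\circ M\circ C_0\cong (I\times L\#T^2)$ respects the ambient $1$-cycle $I\times u\cup I\times w$ used to define the singular bundle. Since the entire manipulation is localized in a $4$-ball disjoint from $u$, $H$, and $w$, this should reduce to the standard cancellation lemma for embedded surface cobordisms.
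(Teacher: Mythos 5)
Your decomposition of $I\times L\#T^2$ into birth, split torus‑cobordism on the $U_1$ factor, and fusion saddle is geometrically correct and is in substance the same move the paper makes (the paper groups the birth and the torus handle together into the punctured torus $\Sigma^+$ and uses $\II(\Sigma^+)(\mathbf{u}_0)=2\mathbf{v}_-$ directly, which is just the computation underlying your quoted formula \eqref{sigma-U0}; your three‑step factorization is equivalent to the paper's two‑step one). The handle‑cancellation worry you raise at the end is a non‑issue: the entire manipulation is local to a small ball disjoint from $u$, $H$, and $w$, so the singular bundle data is carried along trivially, exactly as the paper already argues when constructing $S$ in Proposition \ref{S-RP2}.

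The genuine weak point is the top‑level reduction step. You assert that, because the integral operator $\sigma_p$ has image inside $2\,\II^\sharp(L,u;\mathbb{Z})$ and because $\II^\sharp$ is computed from a free $\mathbb{Z}$-complex, the mod‑$2$ operator must vanish ``by naturality of mod‑$2$ reduction.'' That implication is false in general: divisibility of the induced map on integral homology does not control the $\mathrm{Tor}$ part of $H_*(C\otimes\mathbb{Z}/2)$, and there are free chain complexes with chain maps whose integral homology map is divisible by $2$ but whose mod‑$2$ homology map is nonzero. The correct way to finish is to rerun your factorization directly with $\mathbb{Z}/2$ coefficients. Over the field $\mathbb{Z}/2$, the K\"unneth isomorphism $\II^\sharp(L\sqcup U_1;\mathbb{Z}/2)\cong\II^\sharp(L;\mathbb{Z}/2)\otimes\II^\sharp(U_1;\mathbb{Z}/2)$ is clean, the split cobordism $M$ still induces $\mathrm{id}\otimes\sigma_p^{U_1}$ by naturality, and because $\II^\sharp(U_1;\mathbb{Z})\cong\mathbb{Z}^2$ is torsion free, the universal coefficient theorem identifies $\sigma_p^{U_1}$ on $\II^\sharp(U_1;\mathbb{Z}/2)$ with the mod‑$2$ reduction of $\mathbf{v}_+\mapsto 2\mathbf{v}_-$, $\mathbf{v}_-\mapsto 0$, which is the zero map. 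Hence $\II(M;\mathbb{Z}/2)=0$ and $\sigma_p=\II(C_1)\circ 0\circ\II(C_0)=0$ with $\mathbb{Z}/2$ coefficients. Your intermediate computation is exactly what is needed; only the last-paragraph justification for passing to $\mathbb{Z}/2$ needs to be replaced by running the factorization over $\mathbb{Z}/2$ from the start.
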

\begin{proof}
It follows from the equality
\begin{equation*}
  \II(\Sigma^+)(\mathbf{u}_0) = 2\mathbf{v}_-
\end{equation*}
in Proposition \ref{unknot-gen} and the functoriality of $\II^\sharp$.
\end{proof}

\begin{DEF}
Given a pointed link  $(L,\mathbf{p})$ in $S^3$, we add an earring $(m_i,u_i)$ at each mark point $p_i$ ($1\le i \le l$). 
The unreduced instanton Floer homology
of $(L,\mathbf{p})$ is defined as
\begin{equation*}
\II^\sharp(L,\mathbf{p}):=\II(S^3,L\cup H \cup \bigcup_i m_i, w+\sum_i u_i)
\end{equation*}
If $\mathbf{p}\neq \emptyset$, 
the reduced instanton Floer homology
of $(L,\mathbf{p})$ is defined as
\begin{equation*}
\II^\natural(L,\mathbf{p}):=\II(S^3,L\cup \bigcup_i m_i, \sum_i u_i)
\end{equation*} 
\end{DEF}

Suppose $(L,\mathbf{p}=\{p_i\}_{1\le i \le l})$ is a pointed link in $S^3$ and $q\in L\setminus \mathbf{p}$. Similar to \eqref{L-L1-L0},
we have a 3-cyclic exact sequence
\begin{equation}\label{marking-p+q}
\xymatrix{
\cdots\to \II^\sharp(L,\mathbf{p}\cup\{q\})\to \II^\sharp(L,\mathbf{p}) \ar[r]^-{\sigma_q} &\II^\sharp(L,\mathbf{p}) \to \cdots
}
\end{equation}
By Proposition \ref{sigma-Z2-vanishing}, the map $\sigma_q$ is zero if we use $\mathbb{Z}/2$-coefficients. By induction, 
\eqref{marking-p+q} implies the following.
\begin{PR}\label{I-Lp-Z2}
Suppose $(L,\mathbf{p})$ is a pointed link in $S^3$. The we have
\begin{equation*}
\II^\sharp(L,\mathbf{p};\mathbb{Z}/2)\cong (\mathbb{Z}/2)^{\oplus 2^{|\mathbf{p}|}}\otimes_{\mathbb{Z}/2}\II^\sharp(L;\mathbb{Z}/2)
\end{equation*}
\end{PR}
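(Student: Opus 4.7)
The plan is to prove the proposition by induction on $|\mathbf{p}|$. The base case $|\mathbf{p}|=0$ is immediate: the empty collection of earrings reduces the definition of $\II^\sharp(L,\mathbf{p})$ to $\II^\sharp(L)$, matching the formula since $(\mathbb{Z}/2)^{\oplus 2^0}=\mathbb{Z}/2$.

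For the inductive step, suppose the statement holds for all pointed links whose marking set has size $n$, and let $(L,\mathbf{p})$ be a pointed link with $|\mathbf{p}|=n+1$. Pick any $q\in\mathbf{p}$ and set $\mathbf{p}'=\mathbf{p}\setminus\{q\}$. The cyclic exact sequence \eqref{marking-p+q} applied to $(L,\mathbf{p}')$ and the extra point $q$ reads
\begin{equation*}
\cdots\to \II^\sharp(L,\mathbf{p};\mathbb{Z}/2)\to \II^\sharp(L,\mathbf{p}';\mathbb{Z}/2) \xrightarrow{\sigma_q} \II^\sharp(L,\mathbf{p}';\mathbb{Z}/2) \to \cdots
\end{equation*}
By Proposition \ref{sigma-Z2-vanishing}, the map $\sigma_q$ vanishes with $\mathbb{Z}/2$-coefficients, so the long exact sequence degenerates into a short exact sequence
\begin{equation*}
0\to \II^\sharp(L,\mathbf{p}';\mathbb{Z}/2)\to \II^\sharp(L,\mathbf{p};\mathbb{Z}/2) \to \II^\sharp(L,\mathbf{p}';\mathbb{Z}/2) \to 0.
\end{equation*}

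Since $\mathbb{Z}/2$ is a field, this sequence of vector spaces splits, and we obtain
\begin{equation*}
\II^\sharp(L,\mathbf{p};\mathbb{Z}/2)\cong (\mathbb{Z}/2)^{\oplus 2}\otimes_{\mathbb{Z}/2}\II^\sharp(L,\mathbf{p}';\mathbb{Z}/2).
\end{equation*}
The inductive hypothesis identifies $\II^\sharp(L,\mathbf{p}';\mathbb{Z}/2)$ with $(\mathbb{Z}/2)^{\oplus 2^n}\otimes_{\mathbb{Z}/2}\II^\sharp(L;\mathbb{Z}/2)$, and combining the two tensor factors completes the induction. The only nontrivial inputs are Proposition \ref{sigma-Z2-vanishing} (already established) and the exact triangle \eqref{marking-p+q}; the remainder is a formal consequence of working over a field, so there is no real obstacle beyond setting up the induction cleanly.
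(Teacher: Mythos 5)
Your proof is correct and is exactly the argument the paper has in mind: the paper states the result immediately after observing that $\sigma_q$ vanishes mod $2$ (Proposition \ref{sigma-Z2-vanishing}) and invoking induction via the exact triangle \eqref{marking-p+q}. You have simply filled in the routine details of that induction.
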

If we also assume $q$ lies in a component of $L$ with odd marking points, then the orbifold bundle on $(S^3, L\cup \bigcup_i m_i)$
does not extend on the this component.  
The discussion in Section \ref{s-inst} tells us  that $2\sigma_q=0$. If we use $\mathbb{Z}[1/2]$-coefficients, then $\sigma_q=0$. 
On the other hand, $\sigma_q$ is always $0$ in $\mathbb{Z}/2$-coefficients. So we have 
\begin{PR}\label{p+q=2I}
Suppose $(L,\mathbf{p})$ is a pointed link in $S^3$ and $q\in L\setminus \mathbf{q}$ lies in a component of $L$ with odd marking points.
Let $\mathbb{F}$ be any field. Then we have
\begin{equation*}
\II^\sharp(L,\mathbf{p}\cup\{q\};\mathbb{F})\cong \mathbb{F}^2\otimes_{\mathbb{F}}  \II^\sharp(L,\mathbf{p};\mathbb{F})
\end{equation*}
\end{PR}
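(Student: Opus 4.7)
The plan is to feed the exact triangle \eqref{marking-p+q} with coefficients in $\mathbb{F}$ and show that under the hypothesis on $q$, the map $\sigma_q$ is identically zero; the triangle then degenerates into a split short exact sequence, which over a field immediately yields the claimed doubling.

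First I would set up the exact triangle \eqref{marking-p+q} with $\mathbb{F}$-coefficients. This is legitimate because all Floer groups involved are defined by counting points in moduli spaces, and base change along $\mathbb{Z} \to \mathbb{F}$ preserves exact triangles (equivalently, apply the universal coefficient theorem to the mapping cone); so we obtain
\begin{equation*}
\cdots \to \II^\sharp(L,\mathbf{p}\cup\{q\};\mathbb{F}) \to \II^\sharp(L,\mathbf{p};\mathbb{F}) \xrightarrow{\sigma_q} \II^\sharp(L,\mathbf{p};\mathbb{F}) \to \cdots
\end{equation*}
The entire proof reduces to showing $\sigma_q = 0$ over $\mathbb{F}$.

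Next I would split into cases according to the characteristic of $\mathbb{F}$. If $\operatorname{char}\mathbb{F}=2$, then $\sigma_q$ factors through $\II^\sharp(L,\mathbf{p};\mathbb{Z}/2)$, on which Proposition \ref{sigma-Z2-vanishing} gives $\sigma_q=0$. If $\operatorname{char}\mathbb{F}\neq 2$, I would use the discussion of singular bundle data from Section \ref{s-inst}: since $q$ lies on a component of $L$ with an odd number of marking points in $\mathbf{p}$, that component carries an odd number of earring arcs, so the restriction of the total 1-cycle $w+\sum_i u_i$ to this component has odd boundary count, i.e.\ the branched double cover $L_\Delta \to L$ is non-trivial on the component containing $q$. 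The argument given in Section \ref{s-inst} (pick a loop through $q$ whose lift joins the two preimages and transports $\mathbb{K}_q$ to $\bar{\mathbb{K}}_q$) then forces $2\sigma_q=0$ on $\II^\sharp(L,\mathbf{p};\mathbb{Z})$; after base change to $\mathbb{F}$, invertibility of $2$ gives $\sigma_q=0$.

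With $\sigma_q=0$ established, the long exact triangle collapses into a short exact sequence
\begin{equation*}
0 \to \II^\sharp(L,\mathbf{p};\mathbb{F}) \to \II^\sharp(L,\mathbf{p}\cup\{q\};\mathbb{F}) \to \II^\sharp(L,\mathbf{p};\mathbb{F}) \to 0,
\end{equation*}
which splits because $\mathbb{F}$ is a field, yielding the isomorphism $\II^\sharp(L,\mathbf{p}\cup\{q\};\mathbb{F}) \cong \mathbb{F}^2 \otimes_\mathbb{F} \II^\sharp(L,\mathbf{p};\mathbb{F})$. The main subtlety, and the only place I would be careful, is the odd-characteristic step: one must verify that the ``odd marking points on the component of $q$'' hypothesis is precisely what makes the singular bundle data non-extendable on that component, so that the $2\sigma_q=0$ vanishing argument from Section \ref{s-inst} genuinely applies here rather than only after adding $q$ itself.
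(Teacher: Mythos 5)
Your proposal is correct and follows essentially the same route the paper takes: the paper likewise applies the exact triangle \eqref{marking-p+q}, observes that the odd number of marking points on the component of $q$ makes the singular bundle data non-extendable there so that the Section \ref{s-inst} monodromy argument gives $2\sigma_q=0$ (handling characteristic $\neq 2$), and invokes Proposition \ref{sigma-Z2-vanishing} for characteristic $2$. The only cosmetic difference is that you spell out the base-change bookkeeping explicitly, whereas the paper treats it as immediate.
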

With a little bit more work, we can derive a stronger result.
\begin{PR}\label{p+q=2I-Z}
Suppose $(L,\mathbf{p})$ is a pointed link in $S^3$ and $q\in L\setminus \mathbf{q}$ lies in a component of $L$ with at 
least one marking points. Then we have $\sigma_q=0$ on $\II^\sharp(L,\mathbf{p})$ and
\begin{equation*}
\II^\sharp(L,\mathbf{p}\cup\{q\})\cong \mathbb{Z}^2\otimes  \II^\sharp(L,\mathbf{p})
\end{equation*}
We also have 
$\sigma_q=0$ on $\II^\natural(L,\mathbf{p})$ and
\begin{equation*}
\II^\natural(L,\mathbf{p}\cup\{q\})\cong \mathbb{Z}^2\otimes  \II^\natural(L,\mathbf{p})
\end{equation*}
\end{PR}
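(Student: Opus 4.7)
The plan is to invoke the exact triangle \eqref{marking-p+q}
\[
\cdots \to \II^\sharp(L, \mathbf{p} \cup \{q\}) \to \II^\sharp(L, \mathbf{p}) \xrightarrow{\sigma_q} \II^\sharp(L, \mathbf{p}) \to \cdots,
\]
show that $\sigma_q$ vanishes integrally on $\II^\sharp(L,\mathbf{p})$, and then promote the resulting short exact sequence to a tensor-product splitting. The same argument, applied to the analogous triangle without the auxiliary Hopf link $H$, will yield the $\II^\natural$ statement.

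The crux is proving $\sigma_q=0$ on $\II^\sharp(L,\mathbf{p})$. Let $C$ be the component of $q$ and $k:=|C\cap\mathbf{p}|\ge 1$. If $k$ is odd, then $\sum u_j$ restricts nontrivially to $C$ in $H^1(C;\mathbb{Z}/2)$, so by the discussion at the end of Section \ref{s-inst} we have $2\sigma_q=0$, while Proposition \ref{sigma-Z2-vanishing} forces $\sigma_q\otimes\mathbb{Z}/2=0$. Together these trap the image of $\sigma_q$ inside $2G\cap G[2]$, where $G=\II^\sharp(L,\mathbf{p})$. I would finish the vanishing by exploiting the earring $(m_1,u_1)$ at a marking $p_1\in C$ on the same component as $q$: after an isotopy sliding $q$ close to $p_1$, the $T^2$-insertion cobordism defining $\sigma_q$ sits next to the earring cobordism, and a decomposition of the type of Proposition \ref{S-RP2}, combined with Lemma \ref{key}, should identify the combination with a cobordism that is already known to induce the zero map. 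For $k\ge 2$ even, I would induct on $|\mathbf{p}|$: removing a marking $p_1\in C$ yields a pair $(L,\mathbf{p}\setminus\{p_1\})$ with $k-1\ge 1$ (odd) markings on $C$, and the inductive hypothesis combined with naturality of $\sigma_q$ with respect to the $p_1$-exact triangle should transfer the vanishing back to $(L,\mathbf{p})$, provided one can rule out a potential off-diagonal nilpotent contribution using the $\mathbb{Z}/4$-grading of instanton Floer homology.

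Once $\sigma_q=0$ is established, the long exact sequence collapses to
\[
0\to \II^\sharp(L,\mathbf{p})\to \II^\sharp(L,\mathbf{p}\cup\{q\})\to \II^\sharp(L,\mathbf{p})\to 0,
\]
and I would split this integrally by constructing an explicit section through functoriality: the two nontrivial maps in the triangle are induced by cobordisms (cf.\ the discussion preceding Proposition \ref{f=sigma}), and one of the two has a canonical geometric right inverse coming from the cobordism that caps off the earring at $q$. Compatibility of this section with the desired isomorphism $\II^\sharp(L,\mathbf{p}\cup\{q\})\cong \mathbb{Z}^2\otimes \II^\sharp(L,\mathbf{p})$ can be anchored in the model example of the unknot via Proposition \ref{unknot-gen} and then propagated by naturality; the $\II^\natural$ version is parallel.

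The real obstacle is the integer vanishing $\sigma_q=0$. The separate facts $2\sigma_q=0$ and $\sigma_q\otimes\mathbb{Z}/2=0$ only force the image of $\sigma_q$ into $2G\cap G[2]$, which is not automatically zero for an arbitrary finitely generated abelian group $G$. Bridging this last gap needs either a genuinely geometric input tying the $T^2$-cobordism to the earring structure at $p_1$, or a careful inductive bookkeeping of naturality across the $p_1$-triangle; the subsequent splitting step is comparatively routine given the cobordism descriptions of the triangle maps available in Section \ref{earring+triangle}.
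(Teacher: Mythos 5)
Your proposal correctly isolates the crux---proving $\sigma_q=0$ over $\mathbb{Z}$---but, as you concede, the tools you bring to bear do not close it. The constraints $2\sigma_q=0$ (which, as you note, only applies when $|C\cap\mathbf{p}|$ is odd) and $\sigma_q\otimes\mathbb{Z}/2=0$ merely force $\Imm(\sigma_q)\subseteq 2G\cap G[2]$, which is not automatically zero, and the two remedies you sketch---sliding $q$ next to $p_1$ and reinterpreting the $T^2$-insertion via Proposition \ref{S-RP2}/Lemma \ref{key}, or an induction policed by a $\mathbb{Z}/4$-grading---are left at the level of intent. In fact the parity of $|C\cap\mathbf{p}|$ plays no role in the paper's argument, which works uniformly for any component with at least one marking, so the odd/even case split is a detour. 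There is a second, independent gap: even granting $\sigma_q=0$, the resulting short exact sequence $0\to G\to G'\to G\to 0$ need not split over $\mathbb{Z}$, and the ``canonical geometric right inverse'' you appeal to is not something the cobordism maps in the triangle actually furnish.

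The paper's proof sidesteps both problems via an excision computation that your proposal does not use. One adds \emph{two} new points $q_1,q_2$ placed consecutively next to an existing marking $p_1$ on $C$. Because $q_1,q_2$ lie on the same component, the new earring arcs $u_1',u_2'$ contribute a trivial class in $H^1(L;\mathbb{Z}/2)$ and can be deformed into a single arc $u_{12}'$ joining the two new meridians $m_1',m_2'$. Excising along a torus $\partial N(m_1)$ enclosing $m_1,m_1',m_2'$ against $(S^1\times S^2, K_1\cup K_2, v_{12})$ splits off
\[
\II^\sharp(L,\mathbf{p}\cup\{q_1,q_2\})\cong\II^\sharp(L,\mathbf{p})\otimes\II(S^1\times S^2, K_1\cup K_2\cup K_3\cup K_4, v_{12}+v_{34}),
\]
and two further excisions together with $\II(S^1\times\Sigma_2,v')\cong\mathbb{Z}^8$ give $\II^\sharp(L,\mathbf{p}\cup\{q_1,q_2\})\cong\mathbb{Z}^4\otimes\II^\sharp(L,\mathbf{p})$. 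Only at this point does the triangle \eqref{marking-p+q} enter: comparing this computed answer against two applications of the triangle forces $\sigma_{q_1}=\sigma_{q_2}=0$ and delivers $\II^\sharp(L,\mathbf{p}\cup\{q_i\})\cong\mathbb{Z}^2\otimes\II^\sharp(L,\mathbf{p})$ in one stroke, handling the vanishing and the tensor-product structure simultaneously rather than as two separate problems. The excision step is the ingredient your argument is missing.
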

\begin{proof}
Let $p_1$ be a marking point on the component of $L$ where we add $q$.
Instead of adding one marking point $q$, we add two points $q_1$ and $q_2$ and suppose 
$p_1,q_1,q_2$ is consecutive on the component. Denote the earrings assigned to $p_i\in \mathbf{p}$ by $(m_i,u_i)$
and the earrings assigned to $q_1,q_2$ by $(m_1',u_1'), (m_2',u_2')$.

Moving $m_1,m_1',m_2'$ close to each other and take a tubular neighborhood
$N(m_1)$ which contains $m_1,m_1', m_2'$. Notice that the two 1-cycles $u_1', u_2'$ 
can be deformed into an arc $u_{12}'$ joining $m_1'$ and $m_2'$.
Consider another admissible triple $(S^1\times S^2, K_1\cup K_2,v)$ where
$K_1=S^1\times \{r_1\}$ and $K_s=S^1\times \{r_2\}$ are two vertical circles and $v_{12}$ is a small arc joining $K_1$ and $K_2$. 
The representation variety of the triple $(S^1\times S^2, K_1\cup K_2,v_{12})$ consists of a single non-degenerate 
flat connection, hence its Floer homology is just $\mathbb{Z}$.

Given two tori $T_1,T_2$ in a three manifold $Y$, an excision of $Y$ along $T_1, T_2$ means we cut $Y$ along $T_1$ and $T_2$
and re-glue using a diffeomorphism between $T_1$ and $T_2$.
Do excision to
\begin{equation*}
(S^3, L\cup H\cup m_1'\cup m_2'\cup \bigcup_{i=1}^l m_i, w+u_{12}'+\sum_{i=1}^l u_i) \cup (S^1\times S^2, K_1\cup K_2,v_{12})
\end{equation*}
along tori $\partial N(m_1)$ and $\partial N(K_1)$, we obtain another pair of admissible triples
\begin{equation*}
(S^3, L\cup H\cup \bigcup_{i=1}^{l} m_i, w+\sum_{i=1}^l u_i) \cup (S^1\times S^2, K_1\cup K_2\cup K_3\cup K_4,v_{12}+v_{34})
\end{equation*}
where $K_i$'s are vertical circles and $v_{ij}$ is an arc joining $K_i, K_j$.
The excision theorem (\cite{KM:Kh-unknot}*{Theorem 5.6}) 
shows that this excision does not change the instanton Floer homology, which means
\begin{equation}\label{T1111}
\II^\sharp({L},{\mathbf{p}}\cup \{q_1,q_2\})\cong \II^\sharp({L},{\mathbf{p}})
\otimes \II(S^1\times S^2, K_1\cup K_2\cup K_3\cup K_4,v_{12}+v_{34})
\end{equation}
 Do excision twice to 
\begin{equation*}
(S^1\times S^2, K_1\cup K_2\cup K_3\cup K_4,v_{12}+v_{34})
\end{equation*}
along $(\partial N(K_1),\partial N(K_2))$ and $(\partial N(K_3),\partial N(K_4))$, we obtain
 two copies of $(S^1\times S^2, K_1\cup K_2,v_{12})$ and
$(S^1\times \Sigma_2,v')$ where $\Sigma_2$ is a genus 2 surface and $v'$ is a homologically non-trivial 1-cycle on a $\Sigma_2$-slice.
The excision theorem shows that
\begin{equation}\label{T2222}
\II(S^1\times S^2, K_1\cup K_2\cup K_3\cup K_4,v_{12}+v_{34})^{\oplus 2}\cong \II(S^1\times \Sigma_2,v')
\end{equation}
It is know that $\II(S^1\times \Sigma_2,v')=\mathbb{Z}^8$ 
(See \cite{DB:sur-rel}*{Proposition 1.15} or \cite{KM-Alex}*{Lemma 3.2} and the succeeding discussion). So 
\eqref{T1111} and \eqref{T2222} imply 
\begin{equation*}
\II^\sharp({L},{\mathbf{p}}\cup\{q_1,q_2\})\cong \II^\sharp({L},{\mathbf{p}})
\otimes \mathbb{Z}^4
\end{equation*}
On the other hand, $\II^\sharp({L},{\mathbf{p}}\cup\{q_1,q_2\})$ can be obtained by applying the exact sequence 
\eqref{marking-p+q} twice. Therefore we have $\sigma_{q_1}=\sigma_{q_2}=0$ and 
\begin{equation*}
\II^\sharp(L,\mathbf{p}\cup\{q_i\})\cong \mathbb{Z}^2\otimes  \II^\sharp(L,\mathbf{p})
\end{equation*}
The proof for the $\II^\natural$ case is similar.
\end{proof}

 Suppose $L$ is oriented and a diagram $D$ with $n$ crossings has been chosen. We also assume the marking points $\mathbf{p}$
 are away from the crossings.
 Denote the set of crossings for $D$ by $N$. 
 We can pick a diagram $D'$ for $L':=L\cup\bigcup_i m_i$
 which is the same as $D$ on the $L$ component and looks like Figure \ref{L10} for each earring $m_i$. Besides the 
 $n$ crossings from $D$, we also pick $l$ crossings $M=\{c_i\}_{1\le i \le l}$ where $c_i$ is a crossing for $m_i$
 as in Figure \ref{L10}.  The set $N':=N\cup M$ is not a complete list of crossings for $D'$. But for any $v:N'\to \{0,1\}$,
 if we resolve the crossings in $N'$ by 0- or 1- resolutions determined by $v$, the resulting resolved link $L'_v$ is 
 still an unlink ($N'$ is called a \emph{pseudo-diagram} in \cite{KM-filtration}). 
 Given $v,z\in \{0,1\}^{N'}$ and $v\ge z$, there is a skein cobordism $S_{vz}:L'_v\to L'_z$.  
 When $v|_M\neq z|_M$, we also have a non-trivial 2-cycle 
\begin{equation*}
 h_{vz}:=\sum_{1\le i \le l} (v(c_i)-z(c_i))\cdot I\times u_i
\end{equation*}
on $S_{vz}$ as in Section \ref{earring+triangle}. 

According to \cite{KM:Kh-unknot}*{Corollary 6.8}, we have the following.
\begin{PR}\label{ss1}
There is a spectral sequence converging to $\II^\sharp(L,\mathbf{p})$ whose $E_1$-page is
\begin{equation}\label{cube-L'}
\bigoplus_{v\in  \{0,1\}^{N'}} \II^\sharp(L'_v)
\end{equation}
and the differential is the sum of
\begin{equation*}
\pm \II^\sharp(S_{vz},h_{vz}):=\pm \II(I\times S^3, S_{vz}\cup I\times H, h_{vz}+I\times w)
\end{equation*}
with a proper choice of signs where $vz$ ($v\ge z$) is an edge of the cube $\{0,1\}^{N'}$. 
\end{PR}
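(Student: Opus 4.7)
The plan is to apply Kronheimer--Mrowka's cube-of-resolutions spectral sequence \cite{KM:Kh-unknot}*{Corollary 6.8}, in the pseudo-diagram formulation of \cite{KM-filtration}, to the admissible triple
\[
(S^3,\ L\cup H\cup\bigcup_{i=1}^{l} m_i,\ w+\sum_{i=1}^{l} u_i),
\]
whose instanton Floer homology is $\II^\sharp(L,\mathbf{p})$ by definition. To invoke the spectral sequence one must check that $N'=N\cup M$ really is a pseudo-diagram in the sense of \cite{KM-filtration}: resolving all crossings in $N$ unlinks $L$, and for any choice of $v(c_i)\in\{0,1\}$ the $i$-th earring becomes detached from $L$, as is visible from Figure~\ref{L10}. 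Hence $L'_v$ is an unlink for every $v\in\{0,1\}^{N'}$, which is precisely what the pseudo-diagram machinery requires.

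The $E_1$-page and the edge maps are then identified one vertex and one edge at a time. At a vertex $v$, each arc $u_i$ can be deformed into an empty $1$-cycle once its earring crossing $c_i$ has been resolved (this is the observation preceding \eqref{L-L1-L0}), so
\[
\II(S^3,\ L'_v\cup H,\ w+\textstyle\sum_i u_i)\;=\;\II(S^3,\ L'_v\cup H,\ w)\;=\;\II^\sharp(L'_v),
\]
which yields the direct sum \eqref{cube-L'}. Along an edge $v\ge z$ flipping a single crossing $c$, the associated map is induced by the skein cobordism $S_{vz}$ equipped with the restriction of the bundle data $I\times w+\sum_i I\times u_i$. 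If $c\in N$, then each $I\times u_i$ restricts to a $1$-cycle on $S_{vz}$ that can be pushed into a product cobordism and thence into the empty cycle; so $h_{vz}=0$ and the edge map is the ordinary $\pm\II^\sharp(S_{vz})$. If $c=c_j\in M$, the local model of Figure~\ref{L1-band} together with Proposition~\ref{f=sigma} shows that the contribution of $I\times u_j$ persists as the non-trivial summand $(v(c_j)-z(c_j))\cdot I\times u_j$ of the $2$-cycle on $S_{vz}$, while the arcs $u_i$ for $i\neq j$ remain trivially deformable. In both cases the result is $\pm\II^\sharp(S_{vz},h_{vz})$, matching the stated formula.

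The main technical obstacle is ensuring that the iteration of the unoriented skein triangle is compatible with the presence of the non-trivial singular bundle data $\sum_i I\times u_i$ on the edge cobordisms, so that the higher-differential and convergence portions of \cite{KM:Kh-unknot}*{Corollary 6.8} apply verbatim. This is precisely what the pseudo-diagram formalism of \cite{KM-filtration} is designed to handle: the skein triangle is applied crossing-by-crossing, and the bundle data is tracked through each triangle; the one input not already supplied by the original $\II^\sharp$ setup---the connecting map for an earring crossing---is exactly what is computed by Proposition~\ref{f=sigma}. The edge signs are then absorbed into the global sign convention of \cite{KM:Kh-unknot}*{Corollary 6.8}.
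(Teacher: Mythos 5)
Your argument matches the paper's: Proposition \ref{ss1} is, in the paper, just a direct application of \cite{KM:Kh-unknot}*{Corollary 6.8} to the admissible triple $(S^3,\,L\cup H\cup\bigcup_i m_i,\,w+\sum_i u_i)$ together with the pseudo-diagram $N'$, and your vertex-by-vertex identification of the $E_1$-page (deforming the $u_i$ to empty $1$-cycles once $c_i$ is resolved) and of the edge maps is the correct elaboration of that citation. One caveat on logical order: Proposition \ref{f=sigma} is \emph{not} an input needed to invoke Corollary~6.8 --- the edge map $\pm\II^\sharp(S_{vz},h_{vz})$ is already furnished by the skein-triangle iteration for any admissible triple, orientable or not, and Proposition \ref{f=sigma} is used only afterward, exactly as the paper does in the discussion following this proposition, to identify that map with $\pm\sigma_{p_i}$; also, resolving $c_i$ does not detach $m_i$ from $L$ but rather merges it back into $L$, which is precisely why $L'_v$ is isotopic to $L_{v|_N}$ and hence an unlink.
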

If $v|_M=z|_M$, then $S_{vz}$ is an oriented cobordism and $\II^\sharp(S_{vz})$ is well-defined without any sign ambiguity.
As addressed in Section \ref{earring+triangle}, resolving the crossings from the earrings
gives us the original link $L$. The resolution $L'_v$ is the same as $L_{z|_N}$. 
 All the links $L_v'$ and cobordisms $S_{vz}$ can be
oriented consistently so that $\partial S_{vz}=L_z'-L_v'$:
take a checkerboard coloring of the regions of the diagram $D$ and orient $L_v'$ 
by the boundary orientation of the black region away from the smoothings.

If $v,z$ differ at a crossing $c\in M$, then $S_{vz}$ is non-orientable (exactly the situation in Proposition \ref{S-RP2}) 
and  $\II^\sharp(S_{vz},h_{vz})$ is only well-defined up to a sign. By Proposition \ref{f=sigma},
\begin{equation*}
\II^\sharp(S_{vz},h_{vz})=\pm \sigma_p
\end{equation*}
We fix the sign of $\II^\sharp(S_{vz},h_{vz})$ by requiring the above equality holds without the plus-minus sign.

We also fix a complete order on the crossing set $N'$ such that elements in $M$ are greater than elements in $N$
and the order on $M$ coincides with the index-order for $c_i\in M$. 
By  \cite{KM:Kh-unknot}*{Corollary 6.8},  the differential on the $E_1$-page is the sum of
\begin{equation*}
\tilde{\delta}(v,z)\II^\sharp(S_{vz},h_{vz})
\end{equation*}
where 
\begin{equation*}
\tilde{\delta}(v,z):=\sum_{c\ge c_0} v(c), ~~ v(c_0)-z(c_0)=1
\end{equation*}
Fix an element $s\in \{0,1\}^M$, we have a 
sub-$n$-cube $cube(s)$ of the $(n+l)$-cube $\{0,1\}^{N'}$ consists of vertices $v\in \{0,1\}^{N'}$ such that
$v|_M=s$. Since $L'_v\cong L_{v|_N}$, the component
\begin{equation*}
C_s:=\bigoplus_{v\in  cube(s)} \II^\sharp(L'_v)
\end{equation*}
of \eqref{cube-L'} is the same as 
\begin{equation}\label{cube-L}
\bigoplus_{v\in  cube(s)} \II^\sharp(L_{v|_N})
\end{equation}
According to \cite{KM:Kh-unknot}*{Section 8}, \eqref{cube-L} can be identified with the Khovanov complex $CKh(\bar{L})$ of 
the mirror of $L$ and the differential on it is $(-1)^{\sum_{c\in M}s(c)} d_{Kh}$. 
Pick $c_i\in M$,
suppose $v=z$ on $N'\setminus \{c_i\}$ and $v(c_i)-z(c_i)=1$. Then there is also a differential 
\begin{equation}\label{vz-sigma}
 (-1)^{\sum_{j\ge i}v(c_j)}\sigma_{p_i}:\II^\sharp(L'_v)\to \II^\sharp(L'_z)
\end{equation}
Fix $s\ge s'$ in $M$ and $s,s'$ only differ at $i$. Collect all the maps in \eqref{vz-sigma} for $v|_{M}=s$, we obtain
an anti-chain map
\begin{equation*}
(-1)^{\sum_{j\ge i}s(c_j)} \sigma_{p_i} : C_s \to C_{s'}
\end{equation*}
In summary, $E_1$-page is 
\begin{equation}\label{cube-Cs}
E_1=\bigoplus_{s\in \{0,1\}^M}C_s
\end{equation}
where each $C_s$ is a copy of the Khovanov chain complex $CKh(\bar{L})$ and the differential is the sum of
\begin{equation}\label{d-Cs}
(-1)^{\sum_{c\in M}s(c)} d_{Kh}:C_s\to C_s,~~ (-1)^{\sum_{j\ge i}s(c_j)} \sigma_{p_i} : C_s \to C_{s'} 
\end{equation}
where $s\ge s'$ and $s,s'$ only differ at $c_i\in M$.

In \cite{BLS}, Baldwin, Levine and Sarkar defined Khovanov homology for pointed links. 
It turns out that the $E_1$-page \eqref{cube-Cs} 
is isomorphic to the chain complex of a variant of their Khovanov homology for pointed links.
Before proving this, we need to review their definition. 

Let $(L,\mathbf{p})$ be a pointed link as before.
 Let $V:=\mathbb{Z}\{\mathbf{v}_+,\mathbf{v}_-\}$ be the free abelian group with two generators. 
For each $v\in \{0,1\}^{N}$, there is a component of the Khovanov chain complex $CKh(L)$ defined by
$V^{\otimes |L_v|}$ where $|L_v|$ denotes the number of components of $L_v$. Since $L_v$ is merely a collection of circles in the plane,
$V^{\otimes |L_v|}$ can be understood as assigning a copy of $V$ to each circle in $L_v$.
Pick a marking point $p_i\in \mathbf{p}$. There is a unique circle in $L_v$ containing $p$. 
Defining a map
\begin{equation*}
X_{p_i}: V^{\otimes |L_v|} \to V^{\otimes |L_v|}
\end{equation*}
by requiring
\begin{equation}\label{CKh-Xp}
X_{p_i}(\mathbf{v}_+)=\mathbf{v_-},~~ X_{p_i}(\mathbf{v}_-)=0
\end{equation}
on the copy of $V$ assigned to the circle containing $p_i$ and $X_{p_i}$ is the identify map on other copies of $V$'s.
In this way, we obtain a chain map $X_{p_i}$ on 
\begin{equation*}
CKh(L):=\bigoplus_{v\in \{0,1\}^N} V^{\otimes |L_v|}
\end{equation*}
It is clear that $X_{p_i}^2=0$.
The map induced by $X_{p_i}$ on $Kh(L)$ does not depend on the choice of diagram of $L$ and only depend on the component of 
$L$ where $p_i$ lies according to \cite{Kh-module}. Suppose $L$ has $m$ components, then $Kh(L)$ is equipped with a
\begin{equation}\label{Rm}
R_m=\mathbb{Z}[X_1,X_2,\cdots, X_m]/(X_1^2,\cdots, X_m^2)
\end{equation}
module structure. 

Let $\Lambda_{\mathbf{p}}$ be the exterior algebra generated by $\{y_i|i=1,\cdots, l\}$ over $\mathbb{Z}$. Define
\begin{equation*}
CKh(L,\mathbf{p}):=\Lambda_{\mathbf{p}}\otimes CKh(L)
\end{equation*}
with differential 
\begin{align}
&d_{\mathbf{p}}(y_{i_1}\wedge\cdots \wedge y_{i_k}\otimes b )= \nonumber \\ 
&(-1)^k y_{i_1}\wedge\cdots \wedge y_{i_k}\otimes d_{Kh}(b)
 +\sum_{i=1}^l y_i\wedge y_{i_1}\wedge\cdots \wedge y_{i_k}\otimes X_{p_i}(b) \label{d-p}
\end{align}
The homology $Kh(L,\mathbf{p})=H(CKh(L,\mathbf{p}),d_\mathbf{p})$  
is a well-defined invariant for $(L,\mathbf{p})$.

We want to vary the definition slightly. We define a new differential by
\begin{align}
&d'_{\mathbf{p}}(y_{i_1}\wedge\cdots \wedge y_{i_k}\otimes b )= \nonumber \\ 
&(-1)^k y_{i_1}\wedge\cdots \wedge y_{i_k}\otimes d_{Kh}(b)
 +\sum_{i=1}^l y_i\wedge y_{i_1}\wedge\cdots \wedge y_{i_k}\otimes 2X_{p_i}(b) \label{d'-p}
\end{align}
Denote the new homology by $Kh'(L,\mathbf{p})=H(CKh(L,\mathbf{p}),d'_\mathbf{p})$. Clearly we have
\begin{equation}\label{Kh=Kh'}
Kh'(L,\mathbf{p};\mathbb{Z}[1/2])\cong Kh(L,\mathbf{p};\mathbb{Z}[1/2])
\end{equation}
But the two homologies could be different with $\mathbb{Z}$ or $\mathbb{Z}/2$ coefficients.

\begin{PR}
The $E_1$-page \eqref{cube-Cs} of the spectral sequence in Proposition \ref{ss1} is isomorphic to 
$(CKh(\bar{L},\mathbf{p}),d'_{\mathbf{p}})$.
\end{PR}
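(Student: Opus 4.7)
The plan is to build an explicit chain isomorphism $\phi: E_1 \to (CKh(\bar L, \mathbf{p}), d'_\mathbf{p})$. On the level of underlying groups this is routine: identify the vertices of the cube $\{0,1\}^M$ with subsets of $\{1,\ldots,l\}$ via $s \leftrightarrow S := \{j : s(c_j) = 0\}$, so that \eqref{cube-Cs} is indexed in the same way as the decomposition $CKh(\bar L, \mathbf{p}) = \bigoplus_S y_S \otimes CKh(\bar L)$. The identification of each $C_s$ with $CKh(\bar L)$ recalled from \cite{KM:Kh-unknot}*{Section 8} gives a bijection of generators; I will define $\phi$ summand-by-summand as $\epsilon_s \cdot \mathrm{id}$ for signs $\epsilon_s \in \{\pm 1\}$ to be determined. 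Under $s \leftrightarrow S$, the partial order $s \geq s'$ becomes $S \subseteq S'$, matching the direction in which $d'_\mathbf{p}$ adjoins a generator of $\Lambda_\mathbf{p}$.

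The main content of the proof is to identify the edge maps $\sigma_{p_i}: C_s \to C_{s'}$ with the operator $y_S \otimes b \mapsto y_{S \cup \{i\}} \otimes 2 X_{p_i}(b)$. Since each resolution $L'_v$ is an unlink, $\II^\sharp(L'_v)$ splits as a tensor product $V^{\otimes |L'_v|}$ with one factor per component. The cobordism defining $\sigma_{p_i}$ is the product away from the component of $L'_v$ containing $p_i$, so by the functoriality and tensor behavior of $\II^\sharp$ (Section \ref{s-inst}) it acts as the identity on all tensor factors except the distinguished one, where \eqref{sigma-U0} tells us it sends $\mathbf{v}_+$ to $2\mathbf{v}_-$ and $\mathbf{v}_-$ to $0$. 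Comparing with \eqref{CKh-Xp}, this is exactly the operator $2 X_{p_i}$, which accounts for the factor of $2$ by which $d'_\mathbf{p}$ differs from $d_\mathbf{p}$.

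The remaining task, and the place requiring most care, is the reconciliation of signs. On $E_1$ the internal differential on $C_s$ is $(-1)^{\sum_{c} s(c)} d_{Kh}$ and each $\sigma_{p_i}$-edge carries the sign $(-1)^{\sum_{j \geq i} s(c_j)}$, whereas on $CKh(\bar L, \mathbf{p})$ the internal differential on $y_S \otimes CKh(\bar L)$ is $(-1)^{|S|} d_{Kh}$ and the Koszul sign arising from rewriting $y_i \wedge y_S$ in standard order is $(-1)^{|\{j \in S : j < i\}|}$. One checks that at each vertex and each edge these two sign systems differ only by a sign depending on $s$, so they can be reconciled by a suitable choice of the $\epsilon_s$ above. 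The resulting consistency equations for $\epsilon_s$ over $\mathbb{F}_2$ are solvable precisely because both sides satisfy the same anticommutativity relations on the $2$-faces of the cube (the $E_1$ side by virtue of Proposition \ref{ss1}, the other side because $d'_\mathbf{p}$ squares to zero). With $\epsilon_s$ so chosen, $\phi$ is the desired isomorphism of chain complexes, and the main obstacle is this sign bookkeeping rather than any further geometric input.
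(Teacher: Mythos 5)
Your overall strategy is the same as the paper's: define $\phi$ summand-by-summand as $\epsilon_s\cdot\idd$ under the identification $s\leftrightarrow S=\{j: s(c_j)=0\}$, use \eqref{sigma-U0} together with \eqref{CKh-Xp} to match the edge map $\sigma_{p_i}$ with $2X_{p_i}$, and then choose signs $\epsilon_s$ to make $\phi$ a chain map. The paper simply writes down an explicit formula for $\epsilon_s$ rather than invoking a solvability argument, but the mechanism is the same.

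There is, however, a genuine gap in the way you handle the internal differentials. On the $E_1$ side the internal differential on $C_s$ carries the sign $(-1)^{\sum_{c\in M}s(c)}$, while on the summand $y_S\otimes CKh(\bar L)$ of $(CKh(\bar L,\mathbf{p}),d'_{\mathbf{p}})$ it carries the sign $(-1)^{|S|}=(-1)^{l}(-1)^{\sum_{c\in M}s(c)}$. These two disagree by the global constant $(-1)^{l}$, independent of $s$. Since $\epsilon_s$ is a scalar, conjugating the internal differential on $C_s$ by $\epsilon_s$ changes nothing, so this discrepancy is \emph{not} one ``depending on $s$'' that can be absorbed into the choice of $\epsilon_s$. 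Your claim that ``at each vertex and each edge these two sign systems differ only by a sign depending on $s$, so they can be reconciled by a suitable choice of the $\epsilon_s$'' therefore fails when $l$ is odd. Your $2$-face solvability argument is fine for the edge signs (these are the only constraints the $\epsilon_s$ actually see, and they are consistent precisely because both differentials square to zero), but it says nothing about the vertex constraint, which is $\epsilon$-independent. The paper deals with this explicitly: its $F$ is a chain map when $l$ is even and an \emph{anti}-chain map when $l$ is odd, and in the latter case one composes with an anti-automorphism of $CKh(\bar L,\mathbf{p})$ (e.g.\ a degree-dependent sign change) to upgrade it to a chain isomorphism. You should either allow $\phi$ to be an anti-chain map for odd $l$ and then apply such a correction, or replace the constant $\epsilon_s$ by a sign that also depends on the homological degree in $CKh(\bar L)$. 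With that fix, your argument matches the paper's.
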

\begin{proof}
We have an identification $C_s=CKh(\bar{L})$. Define $\bar{s}=1-s$ and $|\bar{s}|=\sum_{c_i\in M} \bar{s}(c_i)$.
We define a map
\begin{equation*}
\bigoplus_{s\in \{0,1\}^M}
F:C_s\to CKh(\bar{L},\mathbf{p})=\bigoplus_{i_1< \cdots <i_k} \mathbb{Z} \{y_{i_1}\wedge \cdots \wedge y_{i_k}\}\otimes  CKh(\bar{L})
\end{equation*}
by
\begin{equation*}
b \mapsto  (-1)^{\frac{|\bar{s}|(|\bar{s}|+1)}{2}+\sum_k \bar{s}(c_{2k+1})} 
y_1^{\bar{s}(c_1)}\wedge\cdots \wedge y_l^{\bar{s}(c_l)}   \otimes b
\end{equation*}
It is clear $F$ is a group isomorphism.
Using \eqref{d-Cs}, \eqref{sigma-U0}, \eqref{CKh-Xp} and \eqref{d'-p}, it is straightforward to verify that
\begin{equation*}
F:(E_1,d_1)\to (CKh(L,\mathbf{p}),d'_{\mathbf{p}})
\end{equation*}
is a chain map when $l$ is even and is an anti-chain map when $l$ is odd. When $l$ is odd, $F$ can be made into a chain map
by composing with an anti-automorphism on $CKh(L,\mathbf{p})$. 
\end{proof}

Now we can obtain our main result.
\begin{THE}\label{ss2}
Let $(L, \mathbf{p})$ be a pointed link in $S^3$ and $\bar{L}$ be the mirror of $L$. Then
there is a spectral sequence whose $E_2$-page is $Kh'(\bar{L},\mathbf{p})$ and which converges to 
$\II^\sharp(L,\mathbf{p})$.
\end{THE}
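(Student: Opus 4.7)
The plan is to assemble the theorem directly from the two results immediately preceding it, so most of the work has already been done and the remaining argument is essentially bookkeeping.

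First, I would invoke Proposition \ref{ss1}, which produces a spectral sequence abutting to $\II^\sharp(L,\mathbf{p})$ whose $E_1$-page is the direct sum in \eqref{cube-L'} over the vertices of the cube $\{0,1\}^{N'}$ of the groups $\II^\sharp(L'_v)$, with differential assembled from the cobordism maps $\II^\sharp(S_{vz},h_{vz})$ (with the sign conventions fixed via Proposition \ref{f=sigma}). This is the input spectral sequence; convergence to $\II^\sharp(L,\mathbf{p})$ is automatic from \cite{KM:Kh-unknot}*{Corollary 6.8} and is not our responsibility.

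Next, I would apply the immediately preceding proposition, which exhibits an explicit chain isomorphism (up to a possible overall anti-automorphism when $l$ is odd) between $(E_1,d_1)$ and $(CKh(\bar{L},\mathbf{p}),d'_\mathbf{p})$. This identification is exactly what was set up using the decomposition \eqref{cube-Cs} into sub-cubes $C_s$, the identification of each $C_s$ with $CKh(\bar{L})$ along the horizontal Khovanov differentials in \eqref{d-Cs}, and the recognition, via \eqref{sigma-U0}, that the vertical maps $\sigma_{p_i}$ match the Baldwin--Levine--Sarkar-type action $2X_{p_i}$ appearing in the doubled differential \eqref{d'-p}.

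Taking homology of the $E_1$-page then produces the $E_2$-page; by definition $H(CKh(\bar{L},\mathbf{p}),d'_\mathbf{p}) = Kh'(\bar{L},\mathbf{p})$. Together with the convergence already established in Proposition \ref{ss1}, this yields the desired spectral sequence with $E_2 = Kh'(\bar{L},\mathbf{p})$ abutting to $\II^\sharp(L,\mathbf{p})$. The only subtlety worth flagging is the sign of the chain isomorphism in the parity-dependent case: when $l$ is odd, we pre-compose with the anti-automorphism of $CKh(L,\mathbf{p})$ provided in the preceding proof, but this does not affect the resulting homology $Kh'$, so the conclusion is insensitive to it. Since all the nontrivial identifications (the cube-level computation of $\sigma_{p_i}$ via Proposition \ref{f=sigma} and \eqref{sigma-U0}, and the compatibility of signs across edges of the cube) have already been handled in the preceding proposition, there is no further obstacle and the theorem follows immediately.
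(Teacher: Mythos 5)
Your proposal is correct and follows exactly the route the paper takes: Theorem \ref{ss2} is obtained immediately by combining the spectral sequence of Proposition \ref{ss1} with the identification of its $E_1$-page with $(CKh(\bar{L},\mathbf{p}),d'_{\mathbf{p}})$ established in the proposition just before it, then passing to homology to read off the $E_2$-page. Your remark that the odd-$l$ anti-automorphism is harmless at the level of homology is also the right thing to note.
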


\begin{eg}
Let $p$ be a point on the unknot $U_1$. We have
\begin{equation*}
Kh(U_1,\{p\})\cong \mathbb{Z}^2, ~~Kh'(U_1,\{p\})\cong \II^\sharp(U_1,\{p\})\cong \mathbb{Z}^2\oplus \mathbb{Z}/2
\end{equation*}
The Floer homology  $\II^\sharp(U_1,\{p\})$ can be calculated using \eqref{marking-p+q}.
\end{eg}

Let $p_0 \in L\setminus\mathbf{p}$ be a base point. It can be thought as a distinguished marking point in $\mathbf{p}\cup\{p_0\}$.
We can define the reduced Khovanov homology for pointed link $(L,\mathbf{p})$ with base point $p_0$ in the following way. 
Instead of using the Khovanov chain complex $CKh(L)$, we use the reduced Khovanov chain complex
\begin{equation*}
\widetilde{CKh}(L,p_0)=\ker X_{p_0} \subset CKh(L) 
\end{equation*}
We define
\begin{equation*}
\widetilde{CKh}(L,\mathbf{p},p_0):=\Lambda_{\mathbf{p}}\otimes CKh(L,p_0)
\end{equation*}
whose equipped with differentials $d_{\mathbf{p}}$ and $d_{\mathbf{p}}'$ as in \eqref{d-p} and $\eqref{d'-p}$ with $d_{Kh}$ replaced
by $d_{\widetilde{Kh}}$. Now we can define the reduced Khovanov homology (and its variant) of $(L,\mathbf{p})$ by
\begin{equation*}
\widetilde{Kh}(L,\mathbf{p},p_0)= H(\widetilde{CKh}(L,\mathbf{p},p_0),d_{\mathbf{p}}),~~
\widetilde{Kh}'(L,\mathbf{p},p_0)= H(\widetilde{CKh}(L,\mathbf{p},p_0),d_{\mathbf{p}}')
\end{equation*}
We still have
\begin{equation}\label{Kh=Kh'-reduced}
\widetilde{Kh}'(L,\mathbf{p},p_0;\mathbb{Z}[1/2])\cong \widetilde{Kh}(L,\mathbf{p},p_0;\mathbb{Z}[1/2])
\end{equation}
Similar arguments as before give us the following.
\begin{THE}\label{ss4}
Let $(L, \mathbf{p})$ be a pointed link in $S^3$ and $\bar{L}$ be the mirror of $L$. Pick a base point $p_0\in L\setminus \mathbf{p}$.
Then
there is a spectral sequence whose $E_2$-page is $\widetilde{Kh}'(\bar{L},\mathbf{p},p_0)$ and which converges to 
$\II^\natural(L,\mathbf{p}\cup\{p_0\})$.
\end{THE}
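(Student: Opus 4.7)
The plan is to mirror the proof of Theorem \ref{ss2}, replacing each unreduced object with its reduced counterpart and letting the earring at the base point $p_0$ take over the role played by the Hopf link in the definition of $\II^\sharp$.

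First, choose a diagram $D$ of $L$ with crossing set $N$ placed away from $\mathbf{p} \cup \{p_0\}$, and set $L'' := L \cup m_0 \cup \bigcup_{i=1}^{l} m_i$, where $m_0$ is the earring at $p_0$ and each $m_i$ is the earring at $p_i \in \mathbf{p}$, so that $\II^\natural(L, \mathbf{p} \cup \{p_0\}) = \II(S^3, L'', u_0 + \sum_{i=1}^{l} u_i)$. Extend $D$ to a diagram $D''$ for $L''$ exhibiting two crossings at each earring as in Figure \ref{L10}. Let $M = \{c_1, \dots, c_l\}$ select one crossing from each $m_i$ with $i \ge 1$, and deliberately do \emph{not} include the corresponding crossing of $m_0$. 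Then $N' := N \cup M$ is a pseudo-diagram whose resolutions have the form $L''_v = L_{v|_N} \cup m_0$, so that at each vertex the residual bundle data recovers $\II^\natural(U_{k_v}, \{p_0\})$.

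Next, apply the cube-of-resolutions spectral sequence of \cite{KM:Kh-unknot}*{Corollary 6.8}, just as in Proposition \ref{ss1} but with $\II^\natural$ in place of $\II^\sharp$. This yields a spectral sequence converging to $\II^\natural(L, \mathbf{p} \cup \{p_0\})$ whose $E_1$-page is $\bigoplus_{v \in \{0,1\}^{N'}} \II^\natural(U_{k_v}, \{p_0\})$, with differential assembled from skein cobordism maps twisted by the $2$-cycles $h_{vz}$. Stratify by $s = v|_M \in \{0,1\}^M$ to write $E_1 = \bigoplus_{s} C_s$. Since $\II^\natural(U_k, \{p_0\})$ is naturally identified with the reduced subspace $\ker X_{p_0} \subset V^{\otimes k}$, each $C_s$ becomes a copy of the reduced Khovanov chain complex $\widetilde{CKh}(\bar{L}, p_0)$, with horizontal differential $\pm d_{\widetilde{Kh}}$. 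By Proposition \ref{f=sigma} together with the unknot formula \eqref{sigma-U0}, the vertical differentials $C_s \to C_{s'}$ along edges in the $M$-direction act as $\pm 2 X_{p_i}$ on the reduced complex. Applying the same sign-correction isomorphism $F$ used in the proof of Theorem \ref{ss2}, one identifies $(E_1, d_1)$ with $(\widetilde{CKh}(\bar{L}, \mathbf{p}, p_0), d'_{\mathbf{p}})$, so that $E_2 = \widetilde{Kh}'(\bar{L}, \mathbf{p}, p_0)$.

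The main obstacle is the identification carried out in the third step: one must confirm that $\II^\natural(U_k, \{p_0\})$ admits the stated identification with the reduced subspace of $V^{\otimes k}$, and that the $\sigma_{p_i}$ operators restricted to it still realize $\pm 2 X_{p_i}$. This requires an analog of Proposition \ref{unknot-gen} in the reduced setting, which can be obtained either from Proposition \ref{unknot-gen} itself via the exact triangle \eqref{marking-p+q} applied at $p_0$, or via excision arguments along the lines of Proposition \ref{p+q=2I-Z}, together with the direct verification that the pair-of-pants cobordism maps preserve the reduced subspace $\ker X_{p_0}$. When $p_i$ lies on the same component as $p_0$ one finds $2 X_{p_i}|_{\ker X_{p_0}} = 0$, which matches the vanishing of the corresponding piece of $d'_{\mathbf{p}}$ on the reduced complex, providing a consistency check.
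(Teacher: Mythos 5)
Your proposal is correct and follows exactly the route the paper intends: the paper gives no detailed proof of Theorem \ref{ss4}, stating only that ``similar arguments as before give us the following,'' meaning the argument for Theorem \ref{ss2} with the Hopf link $H$ replaced by the earring $m_0$ at the base point and with the crossings of $m_0$ deliberately left out of the pseudo-diagram $N'$, so that each vertex carries $\II^\natural(U_{k_v},\{p_0\})\cong\ker X_{p_0}\subset V^{\otimes k_v}$. You correctly identify the one additional input needed, namely the reduced analog of Proposition \ref{unknot-gen} (identification of $\II^\natural$ for based unlinks with $\ker X_{p_0}$, compatibility of cobordism maps, and $\sigma_{p_i}\mapsto 2X_{p_i}$ on the reduced subspace); this is supplied by \cite{KM:Kh-unknot}*{Section 8}, which treats both the unreduced and reduced cases.
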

If $\mathbf{p}=\emptyset$, then the spectral sequences in Theorems \ref{ss2} and \ref{ss4} are
 Kronheimer-Mrowka's original ones in \cite{KM:Kh-unknot}
which relates $Kh(\bar{L})$ to $\II^\sharp(L)$ and $\widetilde{Kh}(\bar{L})$ to $\II^\natural(L)$ respectively. 

Suppose $L$ has $m$ components and
the base point $p_0$ lies in the last component of $L$ so that $\widetilde{Kh}(L,p_0)$ is equipped with a 
$R_{m-1}$ module structure. 
Given any marking point $p_i$, it determines a element $X_{p_i}\in R_m$ by the component of $L$ it lies on.
The point $p_i$ also determines an element in $R_{m-1}$ by requiring $X_{p_i}=0$ if $p_i$ lies on the $m$-th component. 
\begin{PR}\label{Koszul-Kh}
Suppose $(L,\mathbf{p}=\{p_i\}_{1\le i\le l})$ is a pointed link in $S^3$. 
Let 
$\mathbf{X}=(X_{p_1},\cdots, X_{p_l})\in R_m^{\oplus l}$ and  $\mathbf{X}'=(X_{p_1},\cdots, X_{p_l})\in R_{m-1}^{\oplus l}$ 
There is a spectral sequence whose $E_1$-page is the Koszul complex $K(\mathbf{X},Kh(L))$ and which converges to $Kh(L,\mathbf{p})$.
There is also 
a spectral sequence whose $E_1$-page is the Koszul complex $K(\mathbf{X}',\widetilde{Kh}(L,p_0))$ 
and which converges to $\widetilde{Kh}(L,\mathbf{p},p_0)$.
\end{PR}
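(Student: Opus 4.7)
The plan is to regard $(CKh(L,\mathbf{p}), d_{\mathbf{p}})$ from \eqref{d-p} as the total complex of a bounded bicomplex and run the associated spectral sequence of a natural filtration. Looking at \eqref{d-p}, the differential $d_{\mathbf{p}}$ splits as a sum of two pieces: one of ``internal'' bidegree $(0,1)$ which is $(-1)^k\, \idd_{\Lambda_{\mathbf{p}}} \otimes d_{Kh}$, and one of bidegree $(1,0)$ which increases the wedge degree by one. I would therefore filter $CKh(L, \mathbf{p}) = \Lambda_{\mathbf{p}} \otimes CKh(L)$ by the exterior degree $k = |I|$ of $y_I = y_{i_1} \wedge \cdots \wedge y_{i_k}$. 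Since $\Lambda_{\mathbf{p}}$ is finite-dimensional this filtration is bounded, so the resulting spectral sequence converges to $Kh(L, \mathbf{p})$.

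Next I would identify the first two pages. The $E_0$-differential is $(-1)^k\, \idd\otimes d_{Kh}$, whose cohomology is $\Lambda_{\mathbf{p}} \otimes Kh(L)$; this is the $E_1$-page. The induced $d_1$ comes from the $(1,0)$-piece of $d_{\mathbf{p}}$, namely $y_I \otimes [b] \mapsto \sum_{i=1}^l y_i \wedge y_I \otimes X_{p_i}[b]$. By the $R_m$-module structure on $Kh(L)$ recalled around \eqref{Rm}, the chain-level operator $X_{p_i}$ descends to multiplication by the element $X_{p_i} \in R_m$ determined by the component of $L$ containing $p_i$. This is exactly the Koszul differential $\wedge \mathbf{X}$ on $Kh(L) \otimes_{R_m} K(\mathbf{X}) = K(\mathbf{X}, Kh(L))$, so $(E_1, d_1)$ is identified with the Koszul complex claimed in the statement.

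The reduced case is handled identically, starting from $\widetilde{CKh}(L, \mathbf{p}, p_0) = \Lambda_{\mathbf{p}} \otimes \widetilde{CKh}(L, p_0)$; filtering again by wedge degree yields $E_1 = \Lambda_{\mathbf{p}} \otimes \widetilde{Kh}(L, p_0)$. The step requiring care is verifying that $d_1$ is the Koszul differential for $\mathbf{X}' \in R_{m-1}^{\oplus l}$ rather than for $\mathbf{X} \in R_m^{\oplus l}$. For this I would use two inputs: the $R_m$-action on $Kh(L)$ restricts, on $\widetilde{Kh}(L, p_0)$, to an action which factors through $R_{m-1}$ because the generator $X_{p_0}$ acts as zero (since $\widetilde{CKh}(L, p_0) = \ker X_{p_0}$ at the chain level) and the action depends only on the component containing the marked point, so any $X_{p_i}$ lying on the $m$-th component vanishes on $\widetilde{Kh}(L, p_0)$ as well. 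Granting this, the induced $d_1$ on $\Lambda_{\mathbf{p}} \otimes \widetilde{Kh}(L, p_0)$ is precisely $\wedge \mathbf{X}'$, which produces $K(\mathbf{X}', \widetilde{Kh}(L, p_0))$.

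I expect the bicomplex and convergence set-up to be routine; the main obstacle is the verification of compatibility of the chain-level $X_{p_i}$-actions with the $R_{m-1}$-structure on $\widetilde{Kh}(L,p_0)$, i.e.\ showing that on passage to reduced Khovanov homology the class $X_{p_i}$ only depends on the component of $L$ containing $p_i$ and vanishes when that component is the distinguished one. This is where the classical module-structure results for Khovanov homology enter, and once granted the spectral sequence argument proceeds cleanly in parallel to the unreduced case.
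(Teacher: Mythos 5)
Your proposal matches the paper's proof, which simply filters $CKh(L,\mathbf{p})$ and $\widetilde{CKh}(L,\mathbf{p},p_0)$ by the exterior degree $k$ and notes that the resulting spectral sequence has the stated $E_1$-page; you have filled in the routine details (boundedness, flatness of $\Lambda^k_{\mathbf{p}}$ over $\mathbb{Z}$ so that $E_1 = \Lambda_{\mathbf{p}}\otimes Kh(L)$, identification of $d_1$ with the Koszul differential via the $R_m$-module structure) correctly. The one point you flag as needing care — that $X_{p_i}$ acts through $R_{m-1}$ on $\widetilde{Kh}(L,p_0)$, vanishing when $p_i$ lies on the component of $p_0$ — is indeed what the paper invokes implicitly via the module-structure references preceding the proposition, so your accounting is consistent with the paper's.
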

\begin{proof}
Filter $CKh(L,\mathbf{p})$ and $\widetilde{CKh}(L,\mathbf{p},p_0)$ by the degree map
\begin{equation*}
\deg (y_{i_1}\wedge\cdots \wedge y_{i_k}\otimes b )= k
\end{equation*}
It is clear the associated spectral sequences are the ones in the proposition. 
\end{proof}

If we only resolve crossings in $M$ instead of resolving all the crossings in $N'$, any resolved link $L_v$ ($v\in \{0,1\}^M$)
is a copy of $L$. Again by \cite{KM:Kh-unknot}*{Corollary 6.8}, we have the following spectral sequence..
\begin{PR}\label{ss3}
Suppose $(L,\mathbf{p}=\{p_i\}_{1\le i \le l})$ is a pointed link in $S^3$, then
there is a spectral sequence converging to $\II^\sharp(L,\mathbf{p})$ whose $E_1$-page is 
\begin{equation}\label{cube-Lp-L}
\bigoplus_{v\in  \{0,1\}^{M}} \II^\sharp(L)
\end{equation}
which is a $l$-dimensional cube with a copy of $\II^\sharp(L)$ at each vertex
and the differential is the sum of
\begin{equation*}
d_{vz}=\pm \sigma_{p_i}
\end{equation*}
with a proper choice of sign where $vz$ is an edge of the cube such that $v(c_i)-z(c_i)=1$. 
\end{PR}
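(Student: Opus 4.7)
The plan is to apply Kronheimer--Mrowka's cube-of-resolutions spectral sequence (\cite{KM:Kh-unknot}*{Corollary~6.8}) to the link $L\cup H\cup \bigcup_i m_i$ equipped with the singular bundle data $w+\sum_i u_i$, but, in contrast with Proposition~\ref{ss1}, to use only the $l$ earring crossings $M=\{c_i\}$ as the distinguished pseudo-diagram, rather than the full set $N'=N\cup M$. In other words, one carries out exactly the same construction that produced Proposition~\ref{ss1}, but collapses the sub-cube $\{0,1\}^N$ at every vertex of $\{0,1\}^M$.

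First I would verify that $M$ alone is a valid pseudo-diagram in the sense of \cite{KM-filtration}: since the $c_i$ are independent crossings lying in disjoint local pictures (Figure~\ref{L10}), any total resolution $v\in\{0,1\}^M$ is well-defined and simply removes each earring $(m_i,u_i)$ from the configuration. As already observed at the start of Section~\ref{earring+triangle}, both the $0$- and $1$-resolutions of the top crossing of an earring produce a copy of $L$ in which the arc $u_i$ can be deformed to an empty $1$-cycle. Consequently, for every $v\in\{0,1\}^M$ the resolved admissible triple is
\begin{equation*}
(S^3,\ L\cup H,\ w) \quad\text{together with $\II$-homology}\quad \II^\sharp(L),
\end{equation*}
which identifies each vertex of the $l$-cube with $\II^\sharp(L)$ and gives the claimed $E_1$-page \eqref{cube-Lp-L}.

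Next I would identify the edge maps. For an edge $vz$ with $v(c_i)-z(c_i)=1$ and $v=z$ on $M\setminus\{c_i\}$, the associated skein cobordism is precisely the cobordism studied in Section~\ref{earring+triangle}: it is the product cobordism away from the $i$-th earring crossing, and in a neighbourhood of that crossing it is obtained by attaching a right-handed half-twisted band (Figure~\ref{L1-band}) carrying the non-trivial piece $I\times u_i$ of singular bundle data. By Proposition~\ref{f=sigma}, the induced map equals $\pm\sigma_{p_i}:\II^\sharp(L)\to \II^\sharp(L)$. Fixing a complete order on $M$ (and hence on the cube) determines the signs in the usual way, exactly as in the derivation of \eqref{vz-sigma} from \cite{KM:Kh-unknot}*{Corollary~6.8}.

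The only real substantive point is the applicability of \cite{KM:Kh-unknot}*{Corollary~6.8} to this sub-collection of crossings; once one has it, the spectral sequence, its $E_1$-page, and its differential are read off directly, and convergence to $\II^\sharp(L,\mathbf{p})$ follows because the filtration is the same as in the proof of Proposition~\ref{ss1}, just coarsened by forgetting the crossings in $N$. The main obstacle, if any, is purely bookkeeping: tracking sign conventions on the cube consistently with the choice of signs made for $\II^\sharp(S_{vz},h_{vz})$ in Section~\ref{earring+triangle}, but this is handled by the phrase \emph{``with a proper choice of sign''} in the statement.
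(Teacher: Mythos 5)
Your proposal is correct and follows the same approach that the paper (very tersely) takes: apply Kronheimer--Mrowka's cube-of-resolutions spectral sequence \cite{KM:Kh-unknot}*{Corollary 6.8} using only the $l$ earring crossings $M$ as the pseudo-diagram, observe that every total resolution returns $(S^3, L\cup H, w)$ and hence a copy of $\II^\sharp(L)$, and identify the edge maps with $\pm\sigma_{p_i}$ via Proposition~\ref{f=sigma}. The only wording I would tighten is the remark about ``the same filtration, just coarsened'': convergence to $\II^\sharp(L,\mathbf{p})$ is a direct consequence of Corollary~6.8 applied to the pseudo-diagram $M$, and one need not compare the filtration with that of Proposition~\ref{ss1}.
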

Given a point $p$ on $L$, the operator $\sigma_p$ on $\II^\sharp(L)$ only depends on the component of $L$ where it lies.
Using Proposition \ref{unknot-gen} and the functoriality of $\II^\sharp$, it is easy to see that $\sigma_p^2=0$.
Suppose $L$ has $m$ components, then using $\sigma_p$ we can equip $\II^\sharp(L)$ with a 
\begin{equation*}
R'_m=\mathbb{Z}[x_1,x_2\cdots,x_m]/(x_1^2,\cdots,x_m^2)
\end{equation*}
module structure. Using this module structure, the $E_1$-page in Proposition \ref{ss3} can be interpreted as a Koszul complex
of the $R'_m$-module $\II^\sharp(L)$. We can embed $R'_m$ into the ring $R_m$ defined in \eqref{Rm} by
\begin{equation}\label{x=2X}
x_j\to 2X_j
\end{equation}
So $Kh(L)$ is also equipped with a $R_m'$ module structure. If we are working with $\mathbb{Q}$-coefficients, the 2-factor in \eqref{x=2X}
does not cause any essential difference since $R_m'\otimes \mathbb{Q}=R_m\otimes \mathbb{Q}$.
Now both $Kh(\bar{L})$ and $\II^\sharp(L)$ are equipped with $R'_m$-module structures. 
We have the following result.
\begin{PR}
The spectral sequence relating $Kh(\bar{L})$ to $\II^\sharp(L)$ is a spectral sequence of $R'_m$-modules.
\end{PR}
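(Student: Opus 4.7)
The plan is to lift, for each point $p\in L$ chosen in the diagram away from the crossings, the operator $\sigma_p$ from $\II^\sharp(L)$ to a filtration-preserving chain endomorphism of the cube-of-resolutions complex that underlies the Kronheimer--Mrowka spectral sequence, and to show that its induced map on $E_2=Kh(\bar L)$ is multiplication by $2X_p$. Together with the relations $\sigma_p^2=0$ and $\sigma_p\sigma_{p'}=\sigma_{p'}\sigma_p$ on $\II^\sharp(L)$ noted in the paragraph before Proposition \ref{ss3}, this promotes the spectral sequence to one of $R'_m$-modules under the embedding $x_j\mapsto 2X_j$.

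First I would construct the chain-level lift. The operator $\sigma_p$ is induced by the split cobordism $(I\times S^3,\,I\times(L\cup H)\#T^2,\,I\times(u+w))$ in which the $T^2$-connected sum is performed in a tiny ball centred at $(0,p)$. Because $p$ is disjoint from all the crossings of the diagram $D$ and from the Hopf-link earring $H$, this ball is disjoint from the supports of every saddle cobordism $S_{vz}$ defining an edge of the cube. The cobordism $I\times L_v\#T^2$ and the saddle cobordism $S_{vz}$ therefore assemble into a single split cobordism $S_{vz}\#T^2$, and composing in either order yields diffeomorphic cobordisms with identical singular bundle data. Hence the two induced maps on $\II^\sharp$ agree, and with the sign conventions of \cite{KM:Kh-unknot}*{Section 6} this upgrades $\sigma_p$ to an honest chain endomorphism $\Sigma_p$ of the $E_1$-page acting on each vertex $\II^\sharp(L_v)$ by the corresponding $\sigma_p$ on the unlink $L_v$. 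Standard spectral-sequence machinery then produces an induced action on every subsequent page $E_r$, compatible with passage to $E_\infty$ where it recovers $\sigma_p$ on $\II^\sharp(L)$.

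Next I would identify the action on $E_2$. By the split-union formula $\II^\sharp(L_1\cup L_2)=\II^\sharp(L_1)\otimes\II^\sharp(L_2)$ recalled in Section \ref{s-inst}, $\sigma_p$ on $\II^\sharp(L_v)$ acts only on the tensor factor indexed by the component of $L_v$ containing $p$. Equation \eqref{sigma-U0} of Proposition \ref{unknot-gen} computes this factor action as $\mathbf v_+\mapsto 2\mathbf v_-$, $\mathbf v_-\mapsto 0$, which is exactly $2X_p$ on the copy of $V$ assigned to that component in the Khovanov model. Under the identification of the $E_1$-page with the Khovanov chain complex $CKh(\bar L)$ used in Section \ref{earring+triangle}, $\Sigma_p$ therefore coincides with the Khovanov chain map $2X_p$, so descends on $E_2=Kh(\bar L)$ to multiplication by $2X_p$, as needed.

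The main technical obstacle is the strict (not merely homotopy-level) commutation of $\Sigma_p$ with the cube differentials; rather than verifying this directly, one appeals to the observation that the two sides of each would-be commuting square arise from diffeomorphic split cobordisms carrying identical singular bundle data and so induce equal maps in $\II^\sharp$, while the coherent sign choices inherited from \cite{KM:Kh-unknot}*{Section 6} pin the equality down at the chain level. Once $\Sigma_p$ is a strict chain map, the square $\Sigma_p^2$ arises on each vertex from $I\times L_v\#T^2\#T^2$ and vanishes because $\sigma_p^2=0$ on every unlink by \eqref{sigma-U0}, while $\Sigma_p\Sigma_{p'}=\Sigma_{p'}\Sigma_p$ follows from the disjointness of the two $T^2$-connect-sums. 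These relations are exactly those defining $R'_m$, completing the proof.
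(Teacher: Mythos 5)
Your construction of a chain endomorphism commuting with the edge maps only gives you an action on $E_1$ and hence an induced action on $E_2$; it does \emph{not}, by itself, give ``an induced action on every subsequent page.'' A map on $E_1$ commuting with $d_1$ has no reason to commute with $d_2$, since $d_2$ is determined by the underlying filtered complex, not by the pair $(E_1,d_1)$ alone. To obtain a spectral sequence of $R'_m$-modules one must put the module structure on the filtered Floer chain complex that produces the spectral sequence in the first place (equivalently, on the full cube complex of Kronheimer--Mrowka, whose differential includes the higher-codimension face maps, not just the edge cobordisms), and check that $\sigma_p$ is a filtered chain map there. That is what the argument borrowed from \cite{AHI}*{Proposition 5.8} does.

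The good news is that your main idea carries over essentially unchanged to the correct level: the cobordism defining $\sigma_p$ is a $T^2$-connected sum supported in a small ball around $(0,p)$, which is disjoint from the supports of \emph{all} the face cobordisms $S_{vz}$ of the cube (not just the edges), since $p$ avoids the crossings and the Hopf earring. Splitness then shows that the chain-level cobordism map $\Sigma_p$ commutes with the entire cube differential of the filtered complex, after which the $E_2$-identification with $2X_p$ via the split-union formula and \eqref{sigma-U0}, and the relations $\sigma_p^2=0$ and $\sigma_p\sigma_{p'}=\sigma_{p'}\sigma_p$, close the argument exactly as you outline. So the fix is to move your construction one level down from $E_1$ to the filtered complex; as stated the appeal to ``standard spectral-sequence machinery'' is a genuine gap.
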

The proof is exactly the same as the proof of \cite{AHI}*{Proposition 5.8}, so we skip the proof here. A similar result
in the Heegaard Floer homology case is proved in \cite{HN-module}.

\section{Earrings and sutures}
In this section we want to discuss the relationship between the (singular) instanton Floer homology for pointed links and 
the instanton Floer homology for sutured manifolds developed in \cite{KM:suture,KM-Alex}. 

Given a balanced sutured manifold $(M,\gamma)$, Kronheimer and Mrowka defined its instanton Floer homology
$\SHI(M)$ as the generalized eigenspace of certain operators on the instanton Floer homology of a closed three manifold \cite{KM:suture}.  
In particular, $\SHI(M)$ is defined over $\mathbb{C}$. Given a knot $K$ in a (closed oriented) three-manifold $Y$, 
its instanton knot Floer homology $\KHI(Y,K)$ is defined as $\SHI(M_K,\gamma_K)$ where $M_K$ is the knot complement $Y\setminus N(K)$
and $\gamma_K$ consists of two oppositely-oriented meridian sutures on $\partial M_K$. The definition is generalized for links
in \cite{KM-Alex} following the story in \cite{Juh-suture}: Given a link $L$ in $Y$, let $M_L$ be the link complement  
and $\gamma_L$ be pairs of oppositely-oriented meridian sutures on the boundary tori (exactly one pair on each boundary torus). 
Then define
\begin{equation*}
\KHI(Y,L):= \SHI(M_L,\gamma_L)
\end{equation*}
Given $(Y,L)$ as above, we pick marking points $\mathbf{p}=\{p_i\}_{1\le i \le l}$ with exactly \emph{one} point on each component of 
$L$. Even though we do not assume $Y=S^3$, we can still add earrings $(m_i,u_i)$ 
at $\mathbf{p}$ as in Section \ref{earring+triangle}. 
\begin{PR}\label{SHI=I}
Let $(Y,L,\mathbf{p})$ and the earring $(m_i,u_i)$ be given as above. Then we have
\begin{equation*}
\KHI(Y,L)\cong \II(Y,L\cup \bigcup_i m_i,\sum_i u_i;\mathbb{C})
\end{equation*}
\end{PR}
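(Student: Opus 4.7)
The plan is to invoke the Kronheimer--Mrowka excision theorem (\cite{KM:Kh-unknot}*{Theorem 5.6}) to convert the sutured closure construction defining $\SHI(M_L,\gamma_L)$ into the earring configuration $(Y,L\cup\bigcup_i m_i,\sum_i u_i)$, handling one component at a time.

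First I would unpack the definition $\KHI(Y,L)=\SHI(M_L,\gamma_L)$. The link complement $M_L$ has boundary $\bigsqcup_i T_i^2$ where $T_i^2=\partial N(L_i)$, each equipped with two oppositely oriented meridian sutures dividing $T_i^2$ into two annuli $R_+^i,R_-^i$. A standard closure glues to each $T_i^2$ an auxiliary piece of the form $[-1,1]\times F_i$, with $F_i$ a surface whose boundary circles match the two meridian sutures, producing a closed 3-manifold $\bar Y$ that contains a distinguished surface $R$. Then $\SHI(M_L,\gamma_L)$ is the simultaneous generalized eigenspace on $\II(\bar Y,\dots;\mathbb{C})$ for the $\mu(R_j)$-operators at the top eigenvalues, where the $R_j$ are the components of $R$.

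Second, on the earring side, I focus on one component $L_i$ and its earring $(m_i,u_i)$. In a neighborhood of $p_i$ the triple consists of an arc of $L_i$, the meridian circle $m_i$ linked once with this arc, and the connecting arc $u_i$ carrying the non-trivial singular bundle data. I take a tubular neighborhood $V_i$ containing both this arc of $L_i$ and $m_i$, whose boundary $\partial V_i$ is a 2-torus on which the restriction of $\sum_i u_i$ prescribes specific bundle data. The claim to verify is that Floer excision along $\partial V_i$ (paired with the corresponding torus $T_i^2$ inside the closure $\bar Y$) exchanges the earring piece inside $V_i$ for the auxiliary closure piece $[-1,1]\times F_i$. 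Performing this excision once for each component then identifies $\II(Y,L\cup\bigcup_i m_i,\sum_i u_i;\mathbb{C})$ with the singular instanton Floer homology of the closure used to define $\SHI(M_L,\gamma_L)$.

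Finally, I would check that the resulting instanton Floer homology on the earring side is already concentrated in the required top $\mu(R)$-eigenspace, so that no further projection is needed after excision. This is the essential content of the proposition: it amounts to verifying that the ``earring torus'' $\partial V_i$ contributes a rank-one piece over $\mathbb{C}$ in the top eigenspace, parallel to the $\II(S^1\times\Sigma_2,v')\cong\mathbb{Z}^8$ computation already used in Proposition \ref{p+q=2I-Z}. The main obstacle will be this matching: I must choose the auxiliary surface $F_i$ and the excision tori so that the 1-cycle $u_i$ goes to the correct bundle data on the sutured closure, and then carry out the $\mu(R_j)$-eigenvalue analysis on $\partial V_i$ to confirm that the earring side sits entirely in the top eigenspace. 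Once this local model calculation is established, the rest of the argument is a disjoint application of excision over the components of $L$, made possible by the fact that the tori $\partial V_i$ and the closure pieces are mutually disjoint.
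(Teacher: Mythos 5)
Your excision torus $\partial V_i$ does not satisfy the admissibility hypothesis of the excision theorem (\cite{KM:Kh-unknot}*{Theorem~5.6}), and this is a genuine gap. Since $V_i$ is chosen to contain the arc of $L_i$, the meridian $m_i$, and (necessarily) the connecting arc $u_i$ in its interior, the $1$-cycle $\sum_i u_i$ has zero intersection with $\partial V_i$, so it does \emph{not} ``prescribe specific bundle data'' on $\partial V_i$ as you assert. At the same time, the arc $L_i\cap V_i$ has both of its endpoints on $\partial V_i$, so $\partial V_i$ meets the link $L\cup\bigcup_j m_j$ in exactly two points, an even number. Therefore the restriction of the orbifold bundle to $\partial V_i$ has trivial $w_2$, and $\partial V_i$ is not a non-integral torus; excision along it is not available. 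There is also a structural mismatch: the torus $T_i^2=\partial N(L_i)$ inside the closure $\bar{Y}$ is disjoint from the link, so after the proposed re-gluing the two endpoints of $L_i\cap V_i$ would be left dangling on $T_i^2$ and the link would not close up.

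The paper's proof instead cuts along \emph{two} disjoint tori inside $Y$ itself, namely $\partial N(L_i)$ and $\partial N(m_i)$, both disjoint from $L\cup\bigcup_j m_j$. The arc $u_i$ runs from $m_i\subset N(m_i)$ to $p_i\in L_i\subset N(L_i)$, hence crosses each of these tori transversally in a single point; each torus therefore carries nontrivial bundle data and the excision is admissible. Re-gluing so that the longitude of $L_i$ on $\partial N(L_i)$ is matched with the meridian of $m_i$ on $\partial N(m_i)$ splits off a triple $(S^3,H,w)$ (the two solid tori $N(L_i),N(m_i)$ become a Hopf link) and simultaneously produces the closed manifold $Y_L$ obtained by attaching an annulus along each pair of oppositely-oriented meridian sutures and closing up as in \cite{KM:suture}*{Section~5.1}. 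Because $Y_L$ \emph{is} the sutured closure, no auxiliary $\mu(R_j)$-eigenspace matching is needed: one simply invokes the known isomorphism $\II(Y_L,u';\mathbb{C})\cong\KHI(Y,L)^{\oplus 2}$. If you replace your single torus $\partial V_i$ by the two tori $\partial N(L_i),\partial N(m_i)$, your argument essentially reduces to the paper's.
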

\begin{proof}
When $L$ is a knot, this is the content of \cite{KM:Kh-unknot}*{Proposition 1.4}. In this case, there is a single marking point $p\in L$ hence
a single earring $(m,u)$.
Cut $Y$ along the boundary tori $\partial N(L)$ and $\partial N(m)$ and re-glue using a diffeomorphism that maps the longitude of 
$L$ to the meridian of $m$, the resulting manifolds are a closed manifold $Y_L$ with a 1-cycle $u'$
and a triple $(S^3,H, w)$. The excision theorem
(\cite{KM:Kh-unknot}*{Theorem 5.6} or \cite{KM:suture}*{Theorem 7.7}) shows that $\II(Y_L,u';\mathbb{C})$ 
is isomorphic to $\II(Y,L\cup m,u;\mathbb{C})^{\oplus 2}$. On the other hand, the manifold $Y_L$ can also be obtained by attaching an annulus along
the two meridian sutures and then close up the manifold as described in \cite{KM:suture}*{Section 5.1}. It is exactly the closed manifold 
used to define $\KHI(Y,L)$ and $\II(Y_L,u';\mathbb{C})\cong \KHI(Y,L)\oplus \KHI(Y,L)$. Therefore 
$\KHI(Y,L)\cong \II(Y,L\cup m,u;\mathbb{C})$. 

For a general link $L=\cup_i L_i$ we can do the similar excision to all the pairs $(L_i,m_i)$
to obtain a pair $(Y_L,u')$. We have 
$$\II(Y_L,u';\mathbb{C})\cong\II(Y,L\cup\bigcup_i m_i,\sum_i u_i;\mathbb{C})^{\oplus 2}$$
The manifold $Y_L$ can also be obtained by
attaching an annulus to each pair of oppositely-oriented meridian sutures and closing up the manifold. According to
\cite{KM-Alex}*{Section 2.3}, this manifold can be used to define $\KHI(Y,L)$ and 
$$\II(Y_L,u';\mathbb{C})\cong \KHI(Y,L)\oplus \KHI(Y,L)$$
So we have
$$\II(Y,L\cup\bigcup_i m_i,\sum_i u_i;\mathbb{C})\cong \KHI(Y,L)$$
\end{proof}
Now we assume $(L,\mathbf{p})$ is non-degenerate.
A sutured manifold $(M_L,\gamma_{\mathbf{p}})$ can be defined 
by requiring that $M_L$ is the link complement and $\gamma_{\mathbf{p}}$ consists of a pair of oppositely-oriented meridian sutures
for each point in $\mathbf{p}$. 

We define another pointed link $(\widehat{L},\widehat{\mathbf{p}})$ with $\widehat{L}=L\cup U_1$ and 
$\widehat{\mathbf{p}}=\mathbf{p}\cup \{p_0\}$ where $p_0$ is a marking point on the unknot $U_1$. 
There is also an associated sutured manifold $(M_{\widehat{L}},\gamma_{\widehat{\mathbf{p}}})$ for $(\widehat{L},\widehat{\mathbf{p}})$.
The unreduced
Heegaard knot Floer homology for $(L,\mathbf{p})$ is defined as the sutured Heegaard Floer homology for 
$(M_{\widehat{L}},\gamma_{\widehat{\mathbf{p}}})$ in \cite{BLS}. We can  take the instanton Floer homology for this sutured manifold
to obtain an invariant for $(L,\mathbf{p})$. Pick a subset $\mathbf{p}'\subset {\mathbf{p}}$ with exactly one
point on each component of ${L}$ and $\widehat{\mathbf{p}}'=\mathbf{p}'\cup\{p_0\}$. 
Now we have $\gamma_{{\mathbf{p}}}$ and $\gamma_{{\mathbf{p}}'}$
($\gamma_{\widehat{\mathbf{p}}}$ and $\gamma_{\widehat{\mathbf{p}}'}$) differ by
$(|{\mathbf{p}}|-|\mathbf{p}'|)$ pairs of meridian sutures. According to \cite{KM-Alex}*{Section 3.1}, whenever we add a pair of
meridian sutures, the dimension of the instanton Floer homology is multiplied by $2$. So we have
\begin{equation}\label{SHI-p-p'-non-reduced}
\SHI(M_{{L}},\gamma_{{\mathbf{p}}})\cong \SHI(M_{{L}},\gamma_{\mathbf{p}'})\otimes_{\mathbb{C}} 
\mathbb{C}^{2^{(|{\mathbf{p}}|-|\mathbf{p}'|) }}
\end{equation}
\begin{equation}\label{SHI-p-p'}
\SHI(M_{\widehat{L}},\gamma_{\widehat{\mathbf{p}}})\cong \SHI(M_{\widehat{L}},\gamma_{\widehat{\mathbf{p}}'})\otimes_{\mathbb{C}} 
\mathbb{C}^{2^{(|{\mathbf{p}}|-|\mathbf{p}'|) }}
\end{equation}

\begin{PR}\label{SHI=I-non-degenerate}
Suppose $(L,\mathbf{p})$ is a non-degenerate pointed link in $S^3$ and 
$(M_{{L}},\gamma_{{\mathbf{p}}})$,
$(M_{\widehat{L}},\gamma_{\widehat{\mathbf{p}}})$  are the sutured manifolds defined as above.
Then we have
\begin{equation*}
\SHI(M_{{L}},\gamma_{{\mathbf{p}}})\cong \II^\natural(L,\mathbf{p};\mathbb{C}),~~
\SHI(M_{\widehat{L}},\gamma_{\widehat{\mathbf{p}}})\cong \II^\sharp(L,\mathbf{p};\mathbb{C})
\end{equation*}
\end{PR}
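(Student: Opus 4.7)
The plan is to reduce both isomorphisms to the one‑earring‑per‑component case handled by Proposition \ref{SHI=I}, using the tensor splittings \eqref{SHI-p-p'-non-reduced}--\eqref{SHI-p-p'} on the sutured side and Proposition \ref{p+q=2I-Z} on the singular‑instanton side, and then to deduce the $\II^\sharp$ statement from the $\II^\natural$ one by reinterpreting the Hopf link plus arc $(H,w)$ as an earring attached to an auxiliary unknot.

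First, since $(L,\mathbf{p})$ is non‑degenerate, I choose a subset $\mathbf{p}'\subset \mathbf{p}$ containing exactly one marking point on each component of $L$. With this choice the suture system $\gamma_{\mathbf{p}'}$ is precisely the $\gamma_L$ used in the definition of $\KHI(S^3,L)$, so $\SHI(M_L,\gamma_{\mathbf{p}'})=\KHI(S^3,L)$. Proposition \ref{SHI=I} then gives $\KHI(S^3,L)\cong \II(S^3,L\cup\bigcup_{p_i\in\mathbf{p}'}m_i,\sum u_i;\mathbb{C})$, and the right‑hand side is by definition $\II^\natural(L,\mathbf{p}';\mathbb{C})$. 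This settles the reduced statement in the special case $\mathbf{p}=\mathbf{p}'$.

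Next I pass from $\mathbf{p}'$ to the general $\mathbf{p}$, handling the two sides separately. On the sutured side \eqref{SHI-p-p'-non-reduced} directly yields $\SHI(M_L,\gamma_{\mathbf{p}})\cong \SHI(M_L,\gamma_{\mathbf{p}'})\otimes_{\mathbb{C}}\mathbb{C}^{2^{|\mathbf{p}|-|\mathbf{p}'|}}$. On the singular‑instanton side, every point of $\mathbf{p}\setminus\mathbf{p}'$ lies on a component already carrying at least one point of $\mathbf{p}'$, so I apply Proposition \ref{p+q=2I-Z} iteratively — adding the extra marking points one at a time — to obtain $\II^\natural(L,\mathbf{p};\mathbb{C})\cong \II^\natural(L,\mathbf{p}';\mathbb{C})\otimes_{\mathbb{C}}\mathbb{C}^{2^{|\mathbf{p}|-|\mathbf{p}'|}}$. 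Combining these with the base case gives the first isomorphism.

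For the unreduced statement, the key observation is that if $U_1$ is an auxiliary unknot carrying the marking $p_0$ and the earring $(m_0,u_0)$ at $p_0$ is chosen so that $U_1\cup m_0=H$ and $u_0=w$, then unrolling the definitions of $\II^\sharp$ and $\II^\natural$ gives $\II^\sharp(L,\mathbf{p})=\II^\natural(\widehat{L},\widehat{\mathbf{p}})$, where $(\widehat{L},\widehat{\mathbf{p}})=(L\cup U_1,\mathbf{p}\cup\{p_0\})$. Non‑degeneracy of $(L,\mathbf{p})$ implies non‑degeneracy of $(\widehat{L},\widehat{\mathbf{p}})$, so the already‑established reduced statement applied to $(\widehat{L},\widehat{\mathbf{p}})$ produces $\SHI(M_{\widehat{L}},\gamma_{\widehat{\mathbf{p}}})\cong \II^\natural(\widehat{L},\widehat{\mathbf{p}};\mathbb{C})=\II^\sharp(L,\mathbf{p};\mathbb{C})$, which is the unreduced claim. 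The main piece of bookkeeping is checking that enlarging $\mathbf{p}'$ to $\mathbf{p}$ on the sutured side (adding meridian suture pairs) really is matched to the singular‑instanton side (adding marking points and earrings); but both operations multiply the dimension by the same factor of $2$ at each step, so once naturality at the base case is recorded this is immediate, and no further instanton input is required.
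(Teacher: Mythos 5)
Your proposal is correct and takes essentially the same route as the paper: base case via Proposition \ref{SHI=I} when there is exactly one marking per component, then induction on $|\mathbf{p}|$ matching the $\times 2$ factors from \eqref{SHI-p-p'-non-reduced}, \eqref{SHI-p-p'} on the sutured side with Proposition \ref{p+q=2I-Z} on the instanton side. You make explicit (and correctly so) the identification $\II^\sharp(L,\mathbf{p})=\II^\natural(\widehat{L},\widehat{\mathbf{p}})$ via the auxiliary unknot with its earring forming the Hopf link $H$, which the paper's terse proof leaves implicit.
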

\begin{proof}
Suppose $L$ has $m$ components. 
When $|\mathbf{p}|=m$, there is exactly one marking point on each component of $L$ since $(L,\mathbf{p})$ is non-degenerate. 
In this situation, the proposition follows from Proposition \ref{SHI=I}.
The general case follows from Proposition \ref{p+q=2I-Z}, \eqref{SHI-p-p'-non-reduced}, \eqref{SHI-p-p'} and induction.
\end{proof}

\begin{LE}\label{Isharp=2I}
Suppose $(L,\mathbf{p})$ is a pointed link in $S^3$ with non-empty $\mathbf{p}$. Then we have
\begin{equation*}
\dim \II^\sharp(L,\mathbf{p};\mathbb{F})=2\dim \II^\natural(L,\mathbf{p};\mathbb{F})
\end{equation*}
where $\mathbb{F}$ is any field of characteristic $p\neq 2$.
\end{LE}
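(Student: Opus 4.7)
The plan is to identify $\II^\sharp(L,\mathbf{p})$ with $\II^\natural(L\sqcup U_1,\mathbf{p}\cup\{p_0\})$, where $U_1$ is a disjoint unknot placed in a ball separated from $L$ and $p_0\in U_1$ is a new marking point. This identification is immediate from the definitions: the Hopf link $H$ with arc $w$ used to define $\II^\sharp$ coincides with $U_1\cup m_0$ and the earring arc $u_0$ at $p_0$. Consequently the lemma reduces to showing
\[
\dim \II^\natural(L\sqcup U_1,\mathbf{p}\cup\{p_0\};\mathbb{F}) \;=\; 2\dim \II^\natural(L,\mathbf{p};\mathbb{F})
\]
for every field $\mathbb{F}$ of characteristic different from $2$.

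To prove this, I would introduce an auxiliary marking point $q\in U_1$ and compute $\II^\natural(L\sqcup U_1,\mathbf{p}\cup\{p_0,q\})$ in two different ways. First, since $U_1$ already carries the marking $p_0$, Proposition~\ref{p+q=2I-Z} applied to the additional point $q$ on the marked component $U_1$ gives
\[
\II^\natural(L\sqcup U_1,\mathbf{p}\cup\{p_0,q\}) \;\cong\; \II^\natural(L\sqcup U_1,\mathbf{p}\cup\{p_0\})\otimes\mathbb{Z}^2 \;\cong\; \II^\sharp(L,\mathbf{p})\otimes\mathbb{Z}^2.
\]
Second, I would mimic the excision argument in the proof of Proposition~\ref{p+q=2I-Z}: bring $m_0,m_q$ close together on $U_1$, deform $u_0+u_q$ (whose endpoints on $U_1$ cancel) into a single arc $u_{0q}$ joining $m_0$ to $m_q$, and excise along $\partial N(m_0)$ paired with the auxiliary admissible triple $(S^1\times S^2, K_1\cup K_2, v_{12})$, followed by the second excision from the same proof identifying $\II(S^1\times S^2, K_1\cup K_2\cup K_3\cup K_4, v_{12}+v_{34})$ with $\mathbb{Z}^4$. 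After these excisions, the component $U_1$ no longer meets the singular bundle data, and over a field of characteristic $\neq 2$ it can be deleted from the triple without altering the Floer homology, yielding
\[
\II^\natural(L\sqcup U_1,\mathbf{p}\cup\{p_0,q\};\mathbb{F}) \;\cong\; \II^\natural(L,\mathbf{p};\mathbb{F})\otimes\mathbb{F}^4.
\]

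Comparing the two expressions gives $\II^\sharp(L,\mathbf{p};\mathbb{F})\otimes\mathbb{F}^2 \cong \II^\natural(L,\mathbf{p};\mathbb{F})\otimes\mathbb{F}^4$, which implies the desired dimension equation. The hard part is justifying the deletion of the $U_1$ component after the excision: one must check that a link component with trivial singular bundle data contributes only a trivial factor to the Floer homology over $\mathbb{F}$. Over $\mathbb{Z}$ this cannot hold on the nose, as the example $\II^\sharp(U_1,\{p\})\cong\mathbb{Z}^2\oplus\mathbb{Z}/2$ versus $\II^\natural(U_1,\{p\})\cong\mathbb{Z}$ from the introduction exhibits genuine $2$-torsion; this is exactly the phenomenon that forces the hypothesis $\mathrm{char}\,\mathbb{F}\neq 2$, under which the $2$-torsion becomes invisible to the dimension count.
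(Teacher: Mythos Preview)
Your reduction of $\II^\sharp(L,\mathbf{p})$ to $\II^\natural(L\sqcup U_1,\mathbf{p}\cup\{p_0\})$ and the first computation via Proposition~\ref{p+q=2I-Z} are both fine, but the second computation has two genuine gaps.

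First, the excision from Proposition~\ref{p+q=2I-Z} does not transfer. There the component carries \emph{three} earrings $m_1,m_1',m_2'$, and the arc $u_1$ from the anchor earring $m_1$ to $L$ is the single arc crossing $\partial N(m_1)$; it is precisely this crossing that matches the piece of $v_{12}$ across $\partial N(K_1)$ and makes the excision nontrivial. On your $U_1$ there are only the two earrings $m_0,m_q$. Once $u_0+u_q$ is deformed to the internal arc $u_{0q}$, no arc crosses $\partial N(m_0)$, so the excision data on the two tori do not agree and the excision theorem does not apply. If instead you take $N(m_0)$ to contain only $m_0$ so that one arc does cross, the excision becomes tautological: it just trades $m_0$ for a parallel meridian coming from $K_1$. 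There is no torus here playing the role that $\partial N(m_1)$ plays in that proof.

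Second, even granting a configuration in which $U_1$ carries no arc endpoints, ``deleting $U_1$'' is a connected-sum statement for $\II$, and torus excision cannot separate connect-summands of $S^3$ since every embedded torus in $S^3$ bounds a solid torus. The step you flag as ``the hard part'' is therefore not a detail to be checked but the entire content of the lemma. The paper proves it by attacking the connected sum directly: Fukaya's connected-sum spectral sequence is applied to
\[
(S^3,H,w)\#\bigl(S^3,L\cup\textstyle\bigcup_i m_i,\sum_i u_i\bigr),
\]
and the only possibly nonzero differential on the relevant page is $2\mu(x_1)\otimes 1-1\otimes 2\mu(x_2)$. The relation $\sigma_p^2=8-4\mu(x)$ from \cite{Kr-ob}, combined with $2\sigma_p=0$ for $p$ on an earring (or on $H$), forces $\mu(x_1)=\mu(x_2)=2$ over any field of characteristic $\neq 2$, so this differential vanishes and the factor of $2$ follows.
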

\begin{proof}
Let $Y_1$ and $Y_2$ be two homology 3-spheres.
Fukaya's connected sum theorem \cite{Fukaya-sum} relates the instanton Floer homologies 
$\II(Y_1;\mathbb{Z}),\II(Y_2;\mathbb{Z})$ to $I(Y_1\# Y_2;\mathbb{Z})$ by a 
spectral sequence. An elaboration of Fukaya's theorem with $\mathbb{Q}$-coefficients
can be found in \cite{Don:YM-Floer}*{Section 7.4}. 
Indeed Donaldson's proof works for any field $\mathbb{F}$ whose characteristic is not $2$.
The same problem is 
also studied in \cite{Li-sum}. We want to apply Fukaya's theorem to the connected sum 
\begin{equation*}
(S^3, H\cup L\cup\bigcup_i m_i, w+\sum_i u_i )=(S^3,H, w)\# (S^3,L\cup \bigcup_i m_i, \sum_i u_i)
\end{equation*} 
Even though this is not
a connected sum of homology 3-spheres with empty links in Fukaya's setting, the same proof still works. Our situation is
even easier because we are working in an admissible case so that reducible critical points of the Chern-Simons functional 
do not enter into the discussion.  

From \cite{Don:YM-Floer}*{Section 7.4}, we have a spectral sequence which converges to $\II^\sharp(L,\mathbf{p};\mathbb{F})$ and
whose last possibly non-degenerate page is 
\begin{align*}
\xymatrix{
  \II^\natural(L,\mathbf{p};\mathbb{F})\otimes I(S^3,  H,w;\mathbb{F}) \ar[d]^{f}  \\
  \II^\natural(L, \mathbf{p};\mathbb{F})\otimes I(S^3,  H,w;\mathbb{F})   }
\end{align*}
where the differential $f$ is $2\mu(x_1)\otimes 1- 1\otimes 2\mu (x_2)$ ($x_1\in S^3\setminus L$ and $x_2\in S^3\setminus H$). 
We use
$1$ for the identify map and
$\mu(x)$ for the point operator of degree 4. Our convention for $\mu$ is from \cite{DK}. 
Given any admissible triple $(Y,K,u)$,
\cite{Kr-ob}*{Formula (47)} says that 
\begin{equation*}
\sigma_p^2=8-4\mu(x)
\end{equation*}
for any $p\in K$ and $x\in Y\setminus K$. Pick a point $p$ on an earing $m_i$ for $L$, the orbifold bundle does not extend to $m_i$
and the discussion in Section \ref{s-inst}
shows that $2\sigma_p=0$. Since we are working over field $\mathbb{F}$, we have $\sigma_p=0$ and $\mu(x_1)=2$. Similarly we have
$\mu(x_2)=2$. Therefore the differential $f=0$ and the proposition follows from the spectral sequence. 
\end{proof}
\begin{rmk}
The above lemma does \emph{not} hold when $\mathbb{F}=\mathbb{Z}/2$. Indeed we have
\begin{equation*}
\dim \II^\sharp(L,\mathbf{p};\mathbb{Z}/2)=4\dim \II^\natural(L,\mathbf{p};\mathbb{Z}/2)
\end{equation*}
which can be proved using Proposition \ref{I-Lp-Z2} and \cite{KM-jsharp}*{Lemma 7.7}.
\end{rmk}

\begin{THE}\label{final-inequality}
Suppose $L$ be a link of $m$ components in $S^3$ and $p_0$ is a base point for $L$ lying on the $m$-th component. 
Let $\mathbf{X}=(X_1,\cdots, X_m)\in R_m^{\oplus m}$ and $\mathbf{X}'=(X_1,\cdots, X_{m-1})\in R_{m-1}^{\oplus m-1}$. Then we have
\begin{equation*}
2\dim_{\mathbb{C}} \KHI(L)\le \rank_{\mathbb{Z}}H(K(\mathbf{X}, Kh(L))) 
\end{equation*}
and 
\begin{equation*}
\dim_{\mathbb{C}} \KHI(L)\le \rank_{\mathbb{Z}}H(K(\mathbf{X}', \widetilde{Kh}(L,p_0))) 
\end{equation*}
\end{THE}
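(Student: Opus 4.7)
The plan is to chain together the spectral sequences developed in earlier sections, reducing everything over $\mathbb{C}$ by base change. For the non-reduced inequality, equip $L$ with a pointed structure $(L, \mathbf{p})$ having exactly one marking point per component, so $|\mathbf{p}|=m$ and $(L,\mathbf{p})$ is non-degenerate; then $\gamma_{\mathbf{p}}$ coincides up to isotopy with the sutures $\gamma_L$ that define $\KHI(L)$, so Proposition \ref{SHI=I-non-degenerate} identifies $\KHI(L) \cong \II^\natural(L, \mathbf{p}; \mathbb{C})$, and Lemma \ref{Isharp=2I} then yields $\dim_\mathbb{C} \II^\sharp(L, \mathbf{p}; \mathbb{C}) = 2\dim_\mathbb{C}\KHI(L)$. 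Tensoring the spectral sequence of Theorem \ref{ss2} with $\mathbb{C}$ (which is flat over $\mathbb{Z}$) gives $2\dim_\mathbb{C}\KHI(L) \le \dim_\mathbb{C} Kh'(\bar L, \mathbf{p}; \mathbb{C})$, which by \eqref{Kh=Kh'} equals $\dim_\mathbb{C} Kh(\bar L, \mathbf{p}; \mathbb{C})$. Proposition \ref{Koszul-Kh} finally bounds this above by $\dim_\mathbb{C} H(K(\mathbf{X}, Kh(\bar L))) \otimes_\mathbb{Z} \mathbb{C} = \rank_\mathbb{Z} H(K(\mathbf{X}, Kh(\bar L)))$.

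To replace $\bar L$ by $L$ on the right-hand side, one uses that $Kh(\bar L)$ and $Kh(L)$ are dual as graded $R_m$-modules over $\mathbb{Q}$, so the Koszul complex over the (graded) Frobenius algebra $R_m\otimes\mathbb{Q}$ carries a self-pairing implying $\dim_\mathbb{Q} H(K(\mathbf{X}, Kh(\bar L))) = \dim_\mathbb{Q} H(K(\mathbf{X}, Kh(L)))$. Alternatively, one invokes $\KHI(L)\cong \KHI(\bar L)$ via the orientation-reversal invariance of $\SHI$ on the link exterior and reruns the entire argument with $\bar L$ in place of $L$, at which point the right-hand side becomes $\rank_\mathbb{Z} H(K(\mathbf{X}, Kh(\bar{\bar L}))) = \rank_\mathbb{Z} H(K(\mathbf{X}, Kh(L)))$.

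The reduced inequality follows in parallel: take $\mathbf{p}$ with exactly one marking on each of the first $m-1$ components of $L$ so that $\mathbf{p}\cup \{p_0\}$ is non-degenerate with one marking per component. Proposition \ref{SHI=I-non-degenerate} now gives $\KHI(L)\cong \II^\natural(L, \mathbf{p}\cup\{p_0\}; \mathbb{C})$ directly, without the doubling from Lemma \ref{Isharp=2I}. Chaining Theorem \ref{ss4}, the equivalence \eqref{Kh=Kh'-reduced}, the reduced half of Proposition \ref{Koszul-Kh}, and the same mirror manoeuvre produces $\dim_\mathbb{C}\KHI(L)\le \rank_\mathbb{Z} H(K(\mathbf{X}', \widetilde{Kh}(L, p_0)))$.

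The principal obstacle is the mirror step: verifying $\rank_\mathbb{Z} H(K(\mathbf{X}, Kh(\bar L))) = \rank_\mathbb{Z} H(K(\mathbf{X}, Kh(L)))$ (and its reduced analogue) requires a Koszul self-pairing over the Frobenius algebra $R_m\otimes\mathbb{Q}$, while the alternative route via $\KHI(L)\cong \KHI(\bar L)$ requires the orientation-reversal invariance of sutured instanton Floer homology on the link exterior. Beyond this, the proof is routine coefficient bookkeeping ($\mathbb{Z}$, $\mathbb{Z}[1/2]$, and $\mathbb{C}$) along the chain of spectral sequences and the standard fact that the dimension (over a field) of the abutment of a spectral sequence is at most the dimension of its $E_2$-page.
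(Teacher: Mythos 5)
Your proposal follows the same chain of reductions as the paper's proof: identify $\KHI(L)$ with $\II^\natural$ of a pointed link with one marking per component (you invoke Proposition~\ref{SHI=I-non-degenerate}, the paper cites Proposition~\ref{SHI=I}; they coincide in this case), double via Lemma~\ref{Isharp=2I} for the non-reduced inequality, run Theorems~\ref{ss2}/\ref{ss4} and Proposition~\ref{Koszul-Kh} over $\mathbb{C}$, and convert dimension bounds to $\mathbb{Z}$-ranks. Your flagging of the mirror step as the one non-routine point is apt: the paper handles it tersely by citing $Kh(\bar L;\mathbb{Q})\cong Kh(L;\mathbb{Q})^\ast$, and to deduce $\dim H(K(\mathbf{X},Kh(\bar L;\mathbb{Q})))=\dim H(K(\mathbf{X},Kh(L;\mathbb{Q})))$ from that one indeed needs, as you say, that this is a duality of $R_m\otimes\mathbb{Q}$-modules together with the self-duality of the Koszul complex $K(\mathbf{X})$ under the pairing $\Lambda^k\otimes\Lambda^{m-k}\to\Lambda^m$ --- this is the standard Koszul self-duality and does not actually require the Frobenius structure on $R_m$. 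Your alternative route via $\KHI(L)\cong\KHI(\bar L)$ is not what the paper does and is not established in the text, so the Khovanov-side duality is the safer choice.
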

\begin{proof}
Pick marking points $\{p_1,\cdots,p_m\}$ where $p_i$ lies on the $i$-th component of $L$. 
By Proposition \ref{SHI=I}, we have
\begin{equation*}
\KHI(L)\cong \II^\natural(L,\{p_1,\cdots,p_m\};\mathbb{C})
\end{equation*}
By Lemma \ref{Isharp=2I}, we have
\begin{equation*}
   2\dim \II^\natural(L,\{p_1,\cdots,p_m\};\mathbb{C}) = \dim\II^\sharp(L,\{p_1,\cdots,p_m\};\mathbb{C})
\end{equation*}
The theorem follows from Theorem \ref{ss2}, Theorem \ref{ss4}, Proposition \ref{Koszul-Kh} and the facts that
$Kh(\bar{L};\mathbb{Q})=Kh({L};\mathbb{Q})^\ast$ and 
$\widetilde{Kh}(\bar{L};\mathbb{Q})=\widetilde{Kh}({L};\mathbb{Q})^\ast$.
\end{proof}

\bibliography{references}
\bibliographystyle{hplain}

\Address

\end{document}